\providecommand{\U}[1]{\protect\rule{.1in}{.1in}}
\providecommand{\U}[1]{\protect \rule{.1in}{.1in}}
\newtheorem{theorem}{Theorem}[section]
\newtheorem{definition}[theorem]{Definition}
\newtheorem{example}[theorem]{Example}
\newtheorem{lemma}[theorem]{Lemma}
\newtheorem{remark}[theorem]{Remark}
\newenvironment{proof}[1][Proof]{\noindent \textbf{#1.} }{\  \rule{0.5em}{0.5em}}
\begin{document}

\title{Stochastic differential equations driven by $G$-Brownian motion and ordinary differential equations}
\author{Peng Luo\thanks{School of Mathematics and Qilu Securities Institute for Financial Studies, Shandong University and Department of Mathematics and Statistics, University of Konstanz; pengluo1989@gmail.com}
\and Falei Wang\thanks{School of Mathematics, Shandong University;  flwang2011@gmail.com. Partially supported by Graduate Independent Innovation Foundation
of Shandong University (No. YZC12062). Luo and Wang's research was
partially supported by NSF (No. 10921101) and by the 111
Project (No. B12023)}}
\date{}
\maketitle

%%%%%%%%%%%%%%%%%%%%%%%%%%%%%%%%%%%%%%%%%%%%%%%%%%%%%%%%%%%%%%%%%%%%%%%%%%%%%%%%%%%%%%%%%%%%%%%%%%%%%%%%%%%%%%%

\begin{abstract}
In this paper, we show that the integration of a stochastic differential equation driven by  $G$-Brownian motion ($G$-SDE for short) in $\mathbb{R}$ can be reduced to  the integration of an ordinary differential equation (ODE for short) parametrized by a variable in $(\Omega,\mathcal{F})$.  By this result, we obtain a comparison theorem for $G$-SDEs and its applications.
\\
\\
\textbf{Keywords}:  $G$-Brownian motion, $G$-It\^{o}'s formula, $G$-SDE, Comparison theorem.
\\
\\
\textbf{Mathematics Subject Classification (2000). }60H30, 60H10.
\end{abstract}

\section{Introduction}
Motivated by uncertainty problems, risk measures and the superhedging in finance, Peng systemically established a time-consistent fully nonlinear
expectation theory (see \cite{Peng2004,Peng2005,P09}). As a typical and important case, Peng  introduced the
$G$-expectation theory (see \cite{Peng4, P10}  and the references therein) in 2006.
In the  $G$-expectation framework ($G$-framework for short), the
notion of  $G$-Brownian motion and the corresponding stochastic
calculus of It\^{o}'s type were established. On that basis, Gao \cite{G} and Peng \cite {Peng4}  studied the existence and uniqueness of the solution of $G$-SDE under a standard Lipschitz condition. Moreover,  Lin \cite{L} obtained the existence and uniqueness of the solution of $G$-SDE with reflecting boundary.
For a recent account and development of this
theory we refer the reader to \cite{BL, L1, LQ1, LQ, LP, LY, XZ}.

Under the classical framework, Doss \cite{D} and Huang, Xu and Hu \cite{Huang2} studied the sample solutions of stochastic differential equations, which enables  us to transfer a stochastic differential equation into a set of ordinary differential equations for each sample path. Using the method of sample solutions to SDEs, Huang \cite{Huang1} established a comparison theorem of SDEs.

The aim of this paper is  to study the sample solutions of $G$-SDEs by ODEs parameterized by a variable in basis probability space.
Since $G$-SDE admits a unique solution in the space $M^2_G(0,T)$, the main difficulty is how to prove that the sample solution belongs to this space.
We overcome this problem  through some $G$-stochastic calculus techniques. Then we show that the solution of  $G$-SDE can be represented as a function of both $G$-Brownian motion and a finite variation process. Since we can  use the existing results in the theory of ordinary differential equations directly, this approach provides a powerful tool both in the theoretical analysis and in the practical computation of $G$-SDEs. In particular, we get a new kind of comparison theorem for $G$-SDEs. Moreover, a necessary and sufficient condition for comparison theorem of  $G$-SDEs is also obtained.

This paper is organized as follows: In the next section, we recall some notations and results that we will use in this paper. In section 3, we study the sample solution of $G$-SDE under some strong conditions, then, in section 4, we extend this result to a more general case. Finally in section 5, we establish a new kind of comparison theorem and give its applications.
\section{Preliminaries}
The main purpose of this section is to recall some preliminary
results in $G$-framework which are needed in the sequel.  More
details can be found in  Denis et al \cite{DHP}, Li and Peng \cite{LP}, Lin \cite{L,LY} and Peng \cite{Peng4}.
\subsection{Sublinear expectation}
\begin{definition}
Given a set $\Omega$ and a linear space $\mathcal{H}$
of real valued functions defined on $\Omega$. Moreover, if $X_i \in\mathcal{H}, i = 1, \ldots, d,$
then $\varphi(X_1, \cdots,X_d) \in\mathcal{H}$ for all $ \varphi\in
{C}_{b,Lip}(\mathbb{R}^d)$, where ${C}_{b,Lip}(\mathbb{R}^{d})$ is
the space of all bounded real-valued Lipschitz continuous functions.
 A sublinear expectation $\hat{\mathbb{E}}$ on $\mathcal{H}$ is a
functional $\hat{\mathbb{E}}: \mathcal{H}\rightarrow \mathbb{R}$
satisfying the following
properties: for all $X, Y\in\mathcal{H}$,\\
(a) Monotonicity: if $X\geq Y$, then $\hat{\mathbb{E}}[X]\geq\hat{\mathbb{E}}[Y];$\\
(b) Constant preserving: $\hat{\mathbb{E}}[c]=c$, $\forall \ c\in
\mathbb{R};$\\
(c) Sub-additivity: $\hat{\mathbb{E}}[X+Y]\leq \hat{\mathbb{E}}[X]+\hat{\mathbb{E}}[Y];$\\
(d) Positive homogeneity: $\hat{\mathbb{E}}[\lambda
X]=\lambda\hat{\mathbb{E}}[X]$, $\forall\ \lambda\geq0.$

The triple $(\Omega,\mathcal{H},\hat{\mathbb{E}})$ is called a
sublinear expectation space.  $X \in\mathcal{ H}$ is called a random
variable in $(\Omega,\mathcal{H},\hat{\mathbb{E}})$. We often call
$Y = (Y_1, \ldots, Y_d), Y_i \in\mathcal{ H}$ a $d$-dimensional
random vector in $(\Omega,\mathcal{H},\hat{\mathbb{E}})$.
\end{definition}

\begin{definition}
In a sublinear expectation space
$(\Omega,\mathcal{H},\hat{\mathbb{E}})$, a $n$-dimensional random vector $Y=(Y_1,\ldots,Y_n)$ is said to be independent from  an $m$-dimensional random vector
$X=(X_1,\ldots,X_m)$ under $\hat{\mathbb{E}}$ if for any test
function $\varphi\in{C}_{b,Lip}(\mathbb{R}^{m+n})$
$$\hat{\mathbb{E}}[\varphi(X,Y)]=\hat{\mathbb{E}}[\hat{\mathbb{E}}[\varphi(x,Y)]_{x=X}].$$
\end{definition}
\begin{definition}
Let $X_{1}$ and $X_{2}$ be two $n$-dimensional random vectors defined on sublinear
expectation spaces
$(\Omega_{1},\mathcal{H}_{1},\hat{\mathbb{E}}_{1})$ and
$(\Omega_{2},\mathcal{H}_{2},\hat{\mathbb{E}}_{2})$, respectively.
They are called identically distributed, denoted by $X_{1}\overset{d}{=} X_{2}$,
if
$$\hat{\mathbb{E}}_{1}[\varphi(X_{1})]=\hat{\mathbb{E}}_{2}[\varphi(X_{2})], \ \forall \ \varphi\in {C}_{b,Lip}(\mathbb{R}^n).$$
 $\bar{X}$ is said to be an independent copy of $X$ if $\bar{X}\overset{d}{=}X$ and
$\bar{X}$ is independent from $X$.
\end{definition}
\begin{definition}[$G$-normal distribution]
A random
variable $X$ on a sublinear expectation space
$(\Omega,\mathcal{H},\hat{\mathbb{E}})$ is called (centralized)
$G$-normal distributed if for any $a, b\geq 0$
$$aX+b\bar{X}\overset{d}{=}\sqrt{a^{2}+b^{2}}X,$$
where $\bar{X}$ is an independent copy of $X$. The letter $G$
denotes the function
\[G(a) =\frac{1}{2}(\overline{\sigma}^2 a^+-\underline{\sigma}^2a^-)\]
with $\underline{\sigma}^2 := -\hat{\mathbb{E}}[-X^2]\leq
\hat{\mathbb{E}}[X^2] =: \overline{\sigma}^2.$

\end{definition}

\subsection{$G$-Brownian motion}

\begin{definition}[$G$-Brownian motion] A process $(B_{t}\in\mathcal{H})_{t\geq
0}$ on a sublinear expectation space
$(\Omega,\mathcal{H},\hat{\mathbb{E}})$ is called a $G$-Brownian
motion if the following properties are
satisfied: \\
(a) $B_0=0.$\\
(b) For each $t, s\geq 0$ the increment $B_{t+s}-B_{t}\overset{d}{=} \sqrt{s}X$  and independent from
$(B_{t_{1}},B_{t_{2}},...,B_{t_{n}})$ for each $n\in \mathbb{N}$,
$0\leq t_{1}\leq t_{2}\leq ...\leq t_{n}\leq t$, where $X$ is $G$-normal distributed.
\end{definition}

Denote by
$\Omega=C_{0}(\mathbb{R}^{+})$ the space of all
$\mathbb{R}$-valued continuous paths
$(\omega_{t})_{t\in\mathbb{R}^{+}}$, with $\omega_{0}=0$, equipped
with the distance
$$\rho(\omega^{1},\omega^{2}):=\sum_{i=1}^{\infty}2^{-i}[\max\limits_{t\in[0,i]}|\omega^{1}_t-\omega^{2}_t|\wedge 1].$$
$\mathcal{B}({\Omega})$ is the Borel $\sigma$-algebra of $\Omega$.

For each $t\in[0,\infty)$, we introduce the following spaces.
\\
$\bullet$ $\Omega_{t}:=\{\omega({\cdot\wedge t}):\omega\in\Omega\}$, $\mathcal{F}_{t}:=\mathcal {B}(\Omega_{t})$,
\\
$\bullet$ $L^{0}{(\Omega)}:$ the space of all $\mathcal
{B}(\Omega)$-measurable real functions,
\\
$\bullet$ $L^{0}{(\Omega_{t})}:$ the space of all $\mathcal{F}_{t}$-measurable real functions,
\\
$\bullet$ $B_{b}{(\Omega)}:$ all bounded elements in $L^{0}{(\Omega)}$, $B_{b}{(\Omega_{t})}:=B_{b}{(\Omega)}\cap L^{0}{(\Omega_{t})}$,
\\
$\bullet$
$C_{b}{(\Omega)}:$ all  continuous elements in $B_{b}{(\Omega)}$, $C_{b}{(\Omega_{t})}:=C_{b}{(\Omega)}\cap L^{0}{(\Omega_{t})}.$

In Peng \cite{Peng4}, a $G$-Brownian motion is constructed on a sublinear expectation
space $(\Omega,\mathbb{L}_{G}^1,\hat{\mathbb{E}},(\hat{\mathbb{E}}_t)_{t\geq 0})$, where   $\mathbb{L}_{G}^p(\Omega)$
 is a Banach space
under the natural norm $\|X\|_p=\hat{\mathbb{E}}[|X|^p]^{1/p}$.   In this space the corresponding
canonical process $B_t(\omega) = \omega_t$
 is a $G$-Brownian motion. Denote by $L^p_b(\Omega)$ the completion of $B_b(\Omega)$.
Denis et al.\cite{DHP}  proved that
$L^{p}_b{(\Omega)}\supset\mathbb{L}_{G}^{p}(\Omega)\supset
C_{b}{(\Omega)}$, and there exists a weakly compact family $\mathcal
{P}$ of probability measures defined on $(\Omega, \mathcal
{B}(\Omega))$ such that
$$\hat{\mathbb{E}}[X]=\sup\limits_{P\in\mathcal{P}}E_{P}[X], \ \ X\in\mathbb{L}_{G}^{1}{(\Omega)}.$$

\begin{remark}{\upshape Denis et al. \cite{DHP} gave a concrete set $\mathcal{P}_M$ that represents $\hat{\mathbb{E}}$.
Consider a 1-dimensional Brownian motion $ {B_t}$ on $(\Omega,\mathcal{F},P)$, then
\[
\mathcal{P}_M := \{P_{\theta} : P_{\theta}= P\circ X^{-1},\ X_t = \int^t_0 \theta_sdB_s,\  \theta\in L^2_{\mathcal{F}}([0, T ]; [\underline{\sigma}^2, \overline{\sigma}^2])\}\]
is a set that represents $\hat{\mathbb{E}}$, where $L^2_{\mathcal{F}}([0, T ]; [\underline{\sigma}^2, \overline{\sigma}^2])$ is the collection of all $\mathcal{F}$-adapted
measurable processes with $\underline{\sigma}^2 \leq |\theta(s)|^2 \leq\overline{\sigma}^2$.
}
\end{remark}

Now we introduce the natural choquet capacity
$$c(A):=\sup\limits_{P\in\mathcal{P}}P(A), \ \ A\in\mathcal
{B}(\Omega).$$
\begin{definition}A set $A\subset\mathcal{B}(\Omega)$ is polar if $c(A)=0$.  A
property holds $``quasi$-$surely''$ (q.s.) if it holds outside a
polar set.
\end{definition}
\begin{definition}A real function $X$ on $\Omega$ is said to be quasi-continuous if for each $\varepsilon>0$,
there exists an open set $O$ with $c(O)<\varepsilon$ such that
$X|_{O^{c}}$ is continuous.
\end{definition}

\begin{definition}  We say that  $X:\Omega\mapsto\mathbb{R}$ has a quasi-continuous
version if there exists a quasi-continuous function $Y:\Omega\mapsto\mathbb{R}$ such
that $X = Y$, q.s..
\end{definition}

 Then ${L}_{b}^{p}(\Omega)$ and $\mathbb{L}_{G}^{p}(\Omega)$ can be characterized as
follows:
$${L}_{b}^{p}(\Omega)=\{X\in {L}^{0}(\Omega)|\lim\limits_{N\rightarrow\infty}\mathbb{\hat{E}}[|X|^pI_{|X|\geq N}]=0\}$$
and
$$\mathbb{L}_{G}^{p}(\Omega)=\{X\in {L}^{p}_b(\Omega)|\ \  X\ \text {has a quasi-continuous version}\}.$$
\subsection{$G$-stochastic calculus}

Peng \cite {Peng4} also introduced the related stochastic
calculus of It\^{o}'s type with respect to $G$-Brownian motion (see Li and Peng \cite{LP}, Lin \cite{L} for more general and systematic research).

Let $T\in\mathbb{R}^{+}$ be fixed.
\begin{definition} For each  $p\geq 1$,
consider the following simple type of processes:
\begin{align*}
M_{G}^{0,p}(0,T)=&\{\eta:=\eta_t(\omega)=\sum_{j=0}^{N-1}\xi_{j}(\omega)I_{[t_{j},t_{j+1})}(t)\\
 &\forall\ N>0,\ 0=t_{0}<...<t_{N}=T,\  \xi_{j}\in \mathbb{L}_{G}^p(\Omega_{t_{j}}),\
 j=0,1,2,...,N-1\}.
\end{align*}
Denote by $M_{G}^{p}(0,T)$ the completion of
$M_{G}^{0,p}(0,T)$ under the norm
$$||\eta||_{M^{p}_{G}(0,T)}=|\int_{0}^{T}\hat{\mathbb{E}}[|\eta(t)|^{p}]dt|^{1/p}.$$
\end{definition}

\begin{definition}
For each $\eta\in M_{G}^{0,2}(0,T)$ with the form
$$\eta_{t}(\omega)=\sum_{k=0}^{N-1}\xi_{k}(\omega)I_{[t_{k},t_{k+1})}(t),$$
define
$$I(\eta)=\int_{0}^{T}\eta_sdB_s:=\sum_{k=0}^{N-1}\xi_{k}(B_{t_{k+1}^{N}}-B_{t_{k}^{N}}).$$
The mapping $I: M_{G}^{0,2}(0,T)\mapsto
\mathbb{L}^{2}_{G}(\Omega_{T})$ can be continuously extended to $I:
M_{G}^{2}(0,T)\mapsto \mathbb{L}^{2}_{G}(\Omega_{T})$. For each
$\eta\in M_{G}^{2}(0,T)$, the stochastic integral is defined by
$$I(\eta):=\int_{0}^{T}\eta_sdB_s,\ \ \eta\in M_{G}^{2}(0,T).$$
\end{definition}

Unlike the classical theory, the quadratic variation process of
$G$-Brownian motion $B$ is not always a deterministic process and it
can be formulated in $\mathbb{L}^2_G(\Omega_{t})$ by \[\langle
B\rangle_t : =\lim\limits_{N\rightarrow\infty}\sum\limits_{i=0}^{N-1}(B_{t_{i+1}^N}-B_{t^N_i})^2= B^2_t-2 \int^t_0 B_sdB_s,\]
where $t_i^N=\frac{iT}{N}$ for each integer $N\geq 1$.

\begin{definition}
Define a mapping $M_{G}^{0,1}(0,T)\mapsto\mathbb
{L}^{1}_G(\Omega_{T})$:
$$Q(\eta)=\int_{0}^{T}\eta_sd\langle B\rangle_s:=\sum_{k=0}^{N-1}\xi_{k}[\langle B\rangle_{t_{k+1}^{N}}-\langle B\rangle_{t_{k}^{N}}].$$
Then $Q$ can be uniquely extended to $M_{G}^{1}(0,T)\mapsto \mathbb{L}^{1}_{G}(\Omega_{T})$. We
also denote this mapping by
$$Q(\eta):=\int_{0}^{T}\eta_sd\langle B\rangle_s,\ \ \eta\in M_{G}^{1}(0,T).$$
\end{definition}

 In view of the dual formulation of
$G$-expectation as well as the properties of the quadratic variation
process $\langle B\rangle$ in $G$-framework, Gao \cite{G} obtained the following BDG type
inequalities. \begin{lemma}  For each $p \geq 1$ and $\eta \in
M^p_G(0, T )$,   \[\mathbb{\hat{E}}[ \sup\limits_ {0 \leq t\leq
T} |\int^t_0\eta_sd\langle B\rangle_s|^p]\leq
\bar{\sigma}^{2p}T^{p-1}\int^T_0\mathbb{\hat{E}} [|\eta_s|^p]ds.\]\end{lemma}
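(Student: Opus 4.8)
The plan is to reduce the whole estimate to a pathwise (Stieltjes) computation, exploiting two structural facts about the quadratic variation: $\langle B\rangle$ is nondecreasing, and its increments are dominated by $\bar{\sigma}^2$ times Lebesgue measure. Concretely, using the representation $\mathcal{P}_M$ recorded in the Remark, under each $P_\theta\in\mathcal{P}_M$ the canonical process satisfies $\langle B\rangle_t=\int_0^t\theta_s^2\,ds$ with $\underline{\sigma}^2\le\theta_s^2\le\bar{\sigma}^2$, so that $0\le d\langle B\rangle_s\le\bar{\sigma}^2\,ds$ quasi-surely and $\langle B\rangle_T\le\bar{\sigma}^2 T$ quasi-surely. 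I would first prove the inequality for a simple integrand $\eta\in M_G^{0,p}(0,T)$ and then pass to general $\eta$ by density.

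For simple $\eta$ the integral $\int_0^t\eta_s\,d\langle B\rangle_s$ is the finite sum $\sum_k\xi_k(\langle B\rangle_{t_{k+1}\wedge t}-\langle B\rangle_{t_k\wedge t})$, and since each increment of $\langle B\rangle$ is nonnegative we get the pathwise domination $|\int_0^t\eta_s\,d\langle B\rangle_s|\le\int_0^t|\eta_s|\,d\langle B\rangle_s\le\int_0^T|\eta_s|\,d\langle B\rangle_s$, the last bound being independent of $t$. Hence $\sup_{0\le t\le T}|\int_0^t\eta_s\,d\langle B\rangle_s|^p\le(\int_0^T|\eta_s|\,d\langle B\rangle_s)^p$. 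Applying Hölder's inequality pathwise against the finite measure $d\langle B\rangle_s$ on $[0,T]$ (with exponents $p$ and $p/(p-1)$) gives $(\int_0^T|\eta_s|\,d\langle B\rangle_s)^p\le(\langle B\rangle_T)^{p-1}\int_0^T|\eta_s|^p\,d\langle B\rangle_s\le(\bar{\sigma}^2 T)^{p-1}\int_0^T|\eta_s|^p\,d\langle B\rangle_s$.

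It remains to take the sublinear expectation. By the representation $\hat{\mathbb{E}}[\cdot]=\sup_{P\in\mathcal{P}}E_P[\cdot]$, for each $P_\theta$ we have $E_{P_\theta}[\int_0^T|\eta_s|^p\,d\langle B\rangle_s]=E_{P_\theta}[\int_0^T|\eta_s|^p\theta_s^2\,ds]\le\bar{\sigma}^2\int_0^T E_{P_\theta}[|\eta_s|^p]\,ds\le\bar{\sigma}^2\int_0^T\hat{\mathbb{E}}[|\eta_s|^p]\,ds$, where the middle step uses $\theta_s^2\le\bar{\sigma}^2$ together with Tonelli (the integrand is nonnegative) and the last step uses $E_{P_\theta}\le\hat{\mathbb{E}}$. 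Taking the supremum over $\theta$ yields the same bound for $\hat{\mathbb{E}}$, and collecting the constants $(\bar{\sigma}^2 T)^{p-1}\cdot\bar{\sigma}^2=\bar{\sigma}^{2p}T^{p-1}$ produces the asserted inequality for simple $\eta$.

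Finally I would extend to arbitrary $\eta\in M_G^p(0,T)$ by approximation: choosing simple $\eta^{(n)}\to\eta$ in $M_G^p(0,T)$ and applying the established estimate to the differences $\eta^{(n)}-\eta^{(m)}$ shows that the running integrals are Cauchy for the norm $\hat{\mathbb{E}}[\sup_{0\le t\le T}|\cdot|^p]^{1/p}$, so the inequality passes to the limit. The main obstacle is the third paragraph: legitimately moving the sublinear expectation through the pathwise integral while using the domination $d\langle B\rangle_s\le\bar{\sigma}^2\,ds$ uniformly across all representing measures at once. Once this is organized through $\mathcal{P}_M$, the monotonicity of $\langle B\rangle$ and Hölder do the rest, and the density step is routine precisely because the estimate is linear in the integrand.
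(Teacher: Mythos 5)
Your argument is correct, and it is essentially the approach the paper itself points to: the lemma is stated without proof, credited to Gao \cite{G}, and the paper's one-line description of that proof (``the dual formulation of $G$-expectation as well as the properties of the quadratic variation process $\langle B\rangle$'') is exactly your combination of the representation via $\mathcal{P}_M$, the pathwise bounds $0\leq d\langle B\rangle_s\leq \bar{\sigma}^2\,ds$ q.s., H\"older against $d\langle B\rangle$, and a density passage from simple integrands. The only cosmetic simplification worth noting is that you could skip the representing measures altogether: since $\langle B\rangle_t-\langle B\rangle_s\leq\bar{\sigma}^2(t-s)$ quasi-surely, the pathwise estimate already gives $\sup_t|\int_0^t\eta\,d\langle B\rangle|^p\leq\bar{\sigma}^{2p}T^{p-1}\int_0^T|\eta_s|^p\,ds$, after which sub-additivity of $\hat{\mathbb{E}}$ over the finite sum defining $\int_0^T|\eta_s|^p\,ds$ for simple $\eta$ finishes the job.
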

\begin{lemma}  Let $p \geq 2$ and $\eta \in M^p_G(0, T )$.
Then there exists some constant $C_p$ depending only on $p$ and $T$ such
that
\[\mathbb{\hat{E}}[ \sup\limits_ {0\leq t\leq T} |\int^t_0\eta_sd
B_s|^p]\leq C_p \mathbb{\hat{E}}[|\int^T_0
|\eta_s|^2ds|^{\frac{p}{2}}].\]\end{lemma}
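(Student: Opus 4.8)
The plan is to deduce this $G$-framework estimate from the classical Burkholder--Davis--Gundy inequality by using the dual representation $\hat{\mathbb{E}}[\,\cdot\,]=\sup_{P\in\mathcal{P}}E_P[\,\cdot\,]$ together with the explicit representing family $\mathcal{P}_M$ recalled in the remark. First I would establish the estimate for a simple integrand $\eta\in M_G^{0,2}(0,T)$, for which the integral $\int_0^t\eta_s\,dB_s$ is an explicit finite sum of the form $\sum_k\xi_k(B_{t\wedge t_{k+1}}-B_{t\wedge t_k})$, and only afterwards pass to a general $\eta\in M_G^p(0,T)$ by density (note $p\ge 2$ guarantees $\eta\in M_G^2(0,T)$, so the integral is well defined and simple processes are dense).

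For the core step, fix $P_\theta\in\mathcal{P}_M$, so that under $P_\theta$ the canonical process $B$ has the law of $\int_0^{\cdot}\theta_s\,dW_s$ for a classical Brownian motion $W$, with $\underline{\sigma}^2\le\theta_s^2\le\overline{\sigma}^2$ and $d\langle B\rangle_s=\theta_s^2\,ds$. The key observation is that, for a simple $\eta$, the $G$-integral $\int_0^t\eta_s\,dB_s$ coincides $P_\theta$-a.s.\ with the corresponding classical It\^o integral, which is a continuous $P_\theta$-martingale whose quadratic variation is $\int_0^t|\eta_s|^2\,d\langle B\rangle_s$. Applying the classical BDG inequality under $P_\theta$ then yields
$$E_{P_\theta}\Big[\sup_{0\le t\le T}\Big|\int_0^t\eta_s\,dB_s\Big|^p\Big]\le C_p\,E_{P_\theta}\Big[\Big(\int_0^T|\eta_s|^2\,d\langle B\rangle_s\Big)^{p/2}\Big]\le C_p\,\overline{\sigma}^p\,E_{P_\theta}\Big[\Big(\int_0^T|\eta_s|^2\,ds\Big)^{p/2}\Big],$$
where the last inequality uses $d\langle B\rangle_s\le\overline{\sigma}^2\,ds$. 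Bounding $E_{P_\theta}$ on the right by $\hat{\mathbb{E}}$ and then taking the supremum over $P_\theta\in\mathcal{P}_M$ on the left, the fact that $\mathcal{P}_M$ represents $\hat{\mathbb{E}}$ turns this into the asserted inequality, after absorbing the fixed factor $\overline{\sigma}^p$ into $C_p$.

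To remove the simplicity assumption I would take $\eta^n\in M_G^{0,2}(0,T)$ with $\eta^n\to\eta$ in $M_G^p(0,T)$, apply the inequality just proved to $\eta^n-\eta^m$ to see that $\int_0^{\cdot}\eta^n_s\,dB_s$ is Cauchy for the quasi-norm $\hat{\mathbb{E}}[\sup_{t}|\cdot|^p]^{1/p}$, and identify the limit with $\int_0^{\cdot}\eta_s\,dB_s$; the right-hand side converges because, by H\"older's inequality, $\hat{\mathbb{E}}[(\int_0^T|\eta_s|^2\,ds)^{p/2}]\le T^{p/2-1}\|\eta\|_{M_G^p(0,T)}^p$, which also accounts for the stated $T$-dependence of the constant. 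I expect the main obstacle to be precisely the measure-theoretic bookkeeping of this passage to the limit: one must justify the pathwise identification of the abstractly defined $G$-integral with the classical It\^o integral \emph{simultaneously} for all $P\in\mathcal{P}$ (quasi-sure consistency of the integrals), and must treat the measurability/quasi-continuity of the running supremum $\sup_{0\le t\le T}|\int_0^t\eta_s\,dB_s|$ so that $\sup_{P\in\mathcal{P}}$ may legitimately be taken and interchanged with the limit in $n$. The interchange of $\sup_P$ with the limiting procedure, rather than the classical estimate itself, is where the $G$-specific care is required.
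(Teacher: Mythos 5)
Your proposal is sound, but note that the paper does not prove this lemma at all: it is quoted from Gao \cite{G}, with the one-line attribution that it follows from ``the dual formulation of $G$-expectation as well as the properties of the quadratic variation process $\langle B\rangle$'' --- which is precisely your route (represent $\hat{\mathbb{E}}=\sup_{P}E_P$, apply the classical BDG inequality under each $P_\theta$ using $d\langle B\rangle_s=\theta_s^2\,ds\leq\overline{\sigma}^2\,ds$, then extend from simple integrands by density). So your argument reconstructs the standard proof the paper is implicitly relying on, and the caveats you flag (quasi-sure identification of the integrals and measurability of the running supremum before interchanging $\sup_P$ with the limit) are exactly the points handled in Gao's and Denis--Hu--Peng's papers.
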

\section{$G$-Stochastic differential equation }
Let us first recall some notations,\\
 $\bullet$ $C^{n}(\mathbb{R}^d)$: the space of all functions of class $C^{n}$ from $\mathbb{R}^d$ into $\mathbb{R}$,\\
 $\bullet$ $C^{n}_{b,lip}(\mathbb{R}^d)$: the space of all bounded functions of class $C^{n}(\mathbb{R}^d)$ whose partial derivatives of order less than or equal to $n$ are  bounded Lipschtiz continuous functions,\\
 $\bullet$ $C^{n}([0,T]\times\mathbb{R}^d)$: the space of all functions of class $C^{n}$ from $[0,T]\times\mathbb{R}^d$ into $\mathbb{R}$,\\
 $\bullet$ $C^{n}_{b,lip}([0,T]\times\mathbb{R}^d)$: the space of all bounded functions of class $C^{n}([0,T]\times\mathbb{R}^d)$ whose partial derivatives of order less than or equal to $n$ are  bounded Lipschtiz continuous functions.

Consider the following SDE driven by a $1$-dimensional $G$-Brownian motion:
\begin{align}\label{GSDE1}
X_t=X_0+\int^t_0b(s,X_s)ds+\int^t_0h(s,X_s)d\langle B\rangle_s+\int^t_0\sigma(s,X_s)dB_s,\ t\in[0,T],
\end{align}
where the initial condition $X_0\in\mathbb{R}$  is a given constant.

We recall the following assumption.
\begin{description}
\item[(H)] $b, h , \sigma:\Omega\times[0,T]\times\mathbb{R}\rightarrow\mathbb{R}$ are given functions satisfying $ b(\cdot, x), h(\cdot, x), \sigma(\cdot, x)\in M^
2_G(0, T )$ for each $x \in\mathbb{R}$.
Moreover, there exists some constant $K$ such that $|\varphi(t, x) - \varphi(t, y)| \leq K|x -y|$ for each $t \in [0, T ]$,
$x, y\in\mathbb{ R}$, $\varphi = b, h$ and $\sigma$, respectively.
\end{description}
From Peng \cite{Peng4},
\begin{theorem}\label{GSDE2}
Under the assumption \emph{(H)}, there exists a unique solution $X\in M^
2_G(0, T )$ to the stochastic differential equation \eqref{GSDE1}.
\end{theorem}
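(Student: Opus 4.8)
The plan is to realize the solution of \eqref{GSDE1} as the fixed point of a Picard-type map and to invoke the contraction principle in the Banach space $M^2_G(0,T)$. Define $\Lambda:M^2_G(0,T)\to M^2_G(0,T)$ by
$$\Lambda(X)_t:=X_0+\int^t_0b(s,X_s)\,ds+\int^t_0h(s,X_s)\,d\langle B\rangle_s+\int^t_0\sigma(s,X_s)\,dB_s,\quad t\in[0,T].$$
A process $X\in M^2_G(0,T)$ solves \eqref{GSDE1} if and only if $\Lambda(X)=X$, so it suffices to show that $\Lambda$ is a well-defined self-map of $M^2_G(0,T)$ admitting a unique fixed point.

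The first step, and what I expect to be the main obstacle, is to verify that $\Lambda$ indeed maps $M^2_G(0,T)$ into itself; the delicate point is that $M^2_G(0,T)$ is defined by completion, so membership is not preserved automatically under the nonlinear substitution $s\mapsto\varphi(s,X_s)$. I would first check that for $X\in M^2_G(0,T)$ and $\varphi=b,h,\sigma$ one has $\varphi(\cdot,X_\cdot)\in M^2_G(0,T)$. When $X$ is a simple process this follows by combining the hypothesis $\varphi(\cdot,x)\in M^2_G(0,T)$ with the Lipschitz bound $|\varphi(s,X_s)|\le|\varphi(s,0)|+K|X_s|$ from \textbf{(H)}; the general case is then obtained by approximating $X$ by simple processes $X^n\to X$ and exploiting the Lipschitz estimate $\|\varphi(\cdot,X^n)-\varphi(\cdot,X^m)\|_{M^2_G(0,T)}\le K\|X^n-X^m\|_{M^2_G(0,T)}$ to pass to the limit. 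Once the three integrands are known to lie in $M^2_G(0,T)$, the integrals are well-defined, and the two BDG-type inequalities stated above guarantee that $\Lambda(X)\in M^2_G(0,T)$.

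With $\Lambda$ well-defined, I would establish the key a priori estimate: for $X,Y\in M^2_G(0,T)$, subtracting and applying sub-additivity of $\hat{\mathbb{E}}$, the Cauchy--Schwarz inequality to the $ds$-term, and the two BDG-type inequalities to the $d\langle B\rangle_s$- and $dB_s$-terms, together with \textbf{(H)}, gives
$$\hat{\mathbb{E}}\big[|\Lambda(X)_t-\Lambda(Y)_t|^2\big]\le C\int^t_0\hat{\mathbb{E}}\big[|X_s-Y_s|^2\big]\,ds,$$
where $C$ depends only on $K$, $T$, $\overline{\sigma}$ and the BDG constant. Iterating this bound $n$ times yields
$$\|\Lambda^n(X)-\Lambda^n(Y)\|^2_{M^2_G(0,T)}\le\frac{(CT)^n}{n!}\,\|X-Y\|^2_{M^2_G(0,T)},$$
so that $\Lambda^n$ is a strict contraction for $n$ large enough. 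The Banach fixed point theorem then provides a unique fixed point $X\in M^2_G(0,T)$, which is the desired solution of \eqref{GSDE1}; uniqueness follows directly from the displayed estimate. Alternatively, one may avoid the iteration by working with the weighted norm $\|X\|_\beta=(\int_0^Te^{-\beta t}\hat{\mathbb{E}}[|X_t|^2]\,dt)^{1/2}$ and choosing $\beta$ large so that $\Lambda$ itself becomes a contraction.
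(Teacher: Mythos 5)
The paper does not prove this theorem itself but quotes it from Peng \cite{Peng4}, where the argument is precisely the Picard/contraction scheme you describe: verify that Lipschitz composition preserves $M^2_G(0,T)$, derive the Gronwall-type a priori estimate from the BDG inequalities, and iterate to make $\Lambda^n$ a contraction. Your proposal is correct and matches that standard proof, including its identification of the well-definedness of $\Lambda$ on the completion $M^2_G(0,T)$ as the only delicate point.
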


\begin{remark} {\upshape We remark that there is a potential to extend our results to a much more general setting.
However, in order to focus on the main ideas, in this paper we content ourselves with the case that the coefficients are $1$-dimensional satisfying bounded condition. In particular, by slightly more involved estimates, we can extend our results to the multi-dimensional case without bounded condition.}
\end{remark}

\subsection{A simple case}
In order to explain the main ideas, we  first consider a simple $G$-SDE,
\begin{align}\label{GSDE3}
X_t=X_0+\int^t_0b(X_s)ds+\frac{1}{2}\int^t_0\sigma(X_s)\partial_x\sigma(X_s)d\langle B\rangle_s+\int^t_0\sigma(X_s)dB_s,\ t\in[0,T],
\end{align}
where $\sigma(x)\in C_{b,lip}^{1}(\mathbb{R})$ and $b(x)\in C_{b,lip}(\mathbb{R})$. By Theorem \ref{GSDE2}, $G$-SDE \eqref{GSDE3} admits a unique
solution $X\in M^2_G(0, T )$.

Now consider the following ODE
\begin{equation}\label{ODE1}
\frac{dy}{dx}=\sigma(y),\ \ y(0)=v\in\mathbb{R}.
\end{equation}
The above ODE has a unique solution $y=\varphi(x,v)\in C(\mathbb{R}^2)$. Then,
\begin{equation*}
\partial_x\varphi=\sigma(\varphi),\ \ \ \varphi(0,v)=v.
\end{equation*}
Consequently,
\begin{equation*}
\partial_v\varphi(x,v)=\exp\{\int_{0}^{x}\partial_x\sigma(\varphi(y,v))dy\},\ \ \partial_{xx}^2\varphi(x,v)=(\partial_x\sigma\sigma)(\varphi(x,v)).
\end{equation*}
Next we introduce the following ODE with parameter $\omega$:
\begin{equation}\label{ODE3}
\left\{
\begin{aligned}
dV_t&=\exp\{-\int_{0}^{B_t(\omega)}\partial_x\sigma(\varphi(y,V_t))dy\}b(\varphi(B_t(\omega),V_t))dt,\\
V_0&=X_0.
\end{aligned}
\right.
\end{equation}
For every fixed $\omega$, recalling Cauchy--Lipschitz theorem, the equation \eqref{ODE3} has a unique solution $V_t = V_t(\omega)$ and $ V_t$ is a continuous finite variation process.
Moreover, $V_t(\omega)$ is a continuous function on $(\Omega,\rho)$.

The following result is important
in our future discussion.
\begin{lemma}\label{lem5}
For any $p\geq 0$, there exists a constant $C_p$ depending only on $p$ such that,
\[
\hat{\mathbb{E}}[\sup\limits_{0\leq t\leq T}e^{p|B_t|}]\leq C_p.
\]
\end{lemma}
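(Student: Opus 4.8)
The plan is to exploit the dual representation $\hat{\mathbb{E}}[X]=\sup_{P\in\mathcal{P}}E_P[X]$ recalled above, together with the explicit representing family $\mathcal{P}_M$ from the Remark, so as to reduce the statement to a single classical estimate that is \emph{uniform} over $P\in\mathcal{P}$. The case $p=0$ is trivial, so fix $p>0$. Under any $P_\theta\in\mathcal{P}_M$ the canonical process $B$ has the same law as $M_t=\int_0^t\theta_s\,dW_s$, a continuous $P$-martingale driven by a classical Brownian motion $W$, with quadratic variation $\langle M\rangle_t=\int_0^t\theta_s^2\,ds\leq\overline{\sigma}^2T$ for all $t\in[0,T]$. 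Hence it suffices to bound $E_P[\sup_{0\le t\le T}e^{p|M_t|}]$ by a quantity depending only on $p$ (through $p$, $\overline{\sigma}$ and $T$) and not on $\theta$.

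For the classical estimate I would use the exponential-martingale trick. Since $\theta$ is bounded, $\mathcal{E}_t^{c}=\exp\{cM_t-\tfrac{c^2}{2}\langle M\rangle_t\}$ is a genuine $P$-martingale with $E_P[\mathcal{E}_t^{c}]=1$, whence $E_P[e^{cM_t}]\le e^{\frac{c^2}{2}\overline{\sigma}^2T}$ for every $c\in\mathbb{R}$. The process $e^{(p/2)M_t}$ is then a nonnegative submartingale, and Doob's $L^2$-maximal inequality gives
\[
E_P\big[\sup_{0\le t\le T}e^{pM_t}\big]=E_P\big[\big(\sup_{0\le t\le T}e^{(p/2)M_t}\big)^2\big]\le 4\,E_P[e^{pM_T}]\le 4\,e^{\frac{p^2}{2}\overline{\sigma}^2T}.
\]
Applying the same bound to the martingale $-M$ and using $e^{p|x|}\le e^{px}+e^{-px}$ yields $E_P[\sup_{0\le t\le T}e^{p|M_t|}]\le 8\,e^{\frac{p^2}{2}\overline{\sigma}^2T}$, which is the desired $\theta$-free bound. (Alternatively one could time-change $M$ into a Brownian motion run up to time $\langle M\rangle_T\le\overline{\sigma}^2T$ and invoke the reflection principle; the exponential-martingale route seems more elementary.)

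The step that needs care is the passage back to $\hat{\mathbb{E}}$, since the representation $\hat{\mathbb{E}}[\cdot]=\sup_P E_P[\cdot]$ is formulated for elements of $\mathbb{L}^1_G(\Omega)$, whereas $Y:=\sup_{0\le t\le T}e^{p|B_t|}$ is a priori only an unbounded path functional. I would legitimize this by truncation: each $Y\wedge N$ is bounded and $\rho$-continuous (the metric $\rho$ dominates uniform convergence on $[0,T]$, and $\sup$, $|\cdot|$, $\exp$ are continuous), hence $Y\wedge N\in C_b(\Omega)\subset\mathbb{L}^1_G(\Omega)$, so $\hat{\mathbb{E}}[Y\wedge N]=\sup_{P}E_P[Y\wedge N]\le 8\,e^{\frac{p^2}{2}\overline{\sigma}^2T}$ uniformly in $N$. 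Letting $N\to\infty$ and using the upward convergence of the upper expectation then gives $\hat{\mathbb{E}}[Y]\le 8\,e^{\frac{p^2}{2}\overline{\sigma}^2T}=:C_p$; moreover the finiteness of $\hat{\mathbb{E}}[\sup_t e^{2p|B_t|}]$ controls the tail term $\hat{\mathbb{E}}[YI_{\{Y\ge N\}}]$, confirming $Y\in\mathbb{L}^1_G(\Omega)$. I expect this integrability/interchange justification to be the only real obstacle; once it is in place the entire probabilistic content is the uniform exponential-martingale estimate above.
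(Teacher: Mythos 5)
Your proof is correct, but it takes a genuinely different route from the paper's. The paper stays entirely inside the sublinear-expectation framework: it expands $e^{p|B_t|}$ as a power series, applies sub-additivity and Doob's $L^n$-maximal inequality term by term to get $\hat{\mathbb{E}}[\sup_t|pB_t|^n]\leq(1+\tfrac{1}{n-1})^n\hat{\mathbb{E}}[|pB_T|^n]$, and then sums using the moment estimates for the $G$-normal distribution (Exercise 1.7 in Chapter 3 of Peng's notes), with $(1+\tfrac{1}{n-1})^n\to e$ supplying a uniform factor. You instead reduce to classical probability via the representation $\hat{\mathbb{E}}=\sup_{P\in\mathcal{P}_M}E_P$, prove a single $\theta$-uniform bound $E_P[\sup_t e^{p|M_t|}]\leq 8e^{\frac{p^2}{2}\overline{\sigma}^2T}$ by the exponential-martingale identity plus Doob's $L^2$-inequality, and pass back by truncation and monotone convergence. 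Both arguments are sound; yours yields an explicit constant and avoids the series manipulation and the external moment estimate, at the price of invoking the Denis--Hu--Peng representation and having to justify the truncation/limit step (which you do correctly: $Y\wedge N\in C_b(\Omega)$ since $\rho$-convergence implies uniform convergence on $[0,T]$, and the tail $\hat{\mathbb{E}}[YI_{\{Y\geq N\}}]$ is controlled by the same bound applied with $2p$). The paper's version is more intrinsic to the $G$-framework and is the pattern reused elsewhere in the paper (e.g.\ in Lemma \ref{lw4}), but your argument is a legitimate and arguably more elementary alternative; note only that, as you yourself observe, the constant also depends on $T$ and $\overline{\sigma}$, exactly as the paper's does.
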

\begin{proof}
For any $p\geq 0$, we have
\[\hat{\mathbb{E}}[\sup\limits_{0\leq t\leq T}e^{p|B_t|}]\leq \sum\limits_{n=0}\hat{\mathbb{E}}[\sup\limits_{0\leq t\leq T}\frac{|pB_t|^n}{n!}].\]
Applying Doob's maximal inequality yields that
\[
\hat{\mathbb{E}}[\sup\limits_{0\leq t\leq T}|pB_t|^n]\leq (1+\frac{1}{n-1})^n \hat{\mathbb{E}}[|pB_T|^n].
\]
By Exercise 1.7 in Chapter 3 of Peng \cite{Peng4}, one can show that for some constant $C'_p$ depending only on $p$,
\[\sum\limits_{n=0}\hat{\mathbb{E}}[\frac{|pB_T|^n}{n!}]\leq C'_p.
\]
Since $\lim\limits_{n}(1+\frac{1}{n-1})^n=e$,  we can find some constant $C_p$ depending only on $p$ such that,
\[
\hat{\mathbb{E}}[\sup\limits_{0\leq t\leq T}e^{p|B_t|}]\leq C_p,
\]
which is the desired result.
\end{proof}
\begin{lemma} \label{lw4} For each $p\geq 1$, $V_t\in\mathbb{L}_G^p(\Omega_t)$.
Moreover,  there exists some constant $C_p$ depending only on $p$ such that for each $s\leq t\in[0,T]$,
\[
\hat{\mathbb{E}}[|T^{V}_t-T^V_s|^p]\leq C_p|t-s|^p,
\]
where $T^V$ is the total variation process of $V$.
\end{lemma}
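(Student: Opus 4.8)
The plan is to exploit the pathwise ODE structure of $V$ and reduce both assertions to the exponential moment bound of Lemma \ref{lem5}. For each fixed $\omega$, the map $t\mapsto V_t(\omega)$ solves \eqref{ODE3}, so it is absolutely continuous with derivative equal to the right-hand side of \eqref{ODE3}; hence its total variation over $[0,t]$ is
\[
T^V_t=\int_0^t\Big|\exp\Big\{-\int_0^{B_r(\omega)}\partial_x\sigma(\varphi(y,V_r))\,dy\Big\}\,b(\varphi(B_r(\omega),V_r))\Big|\,dr.
\]
The first thing I would establish is a uniform exponential bound on the integrand. Since $\sigma\in C^1_{b,lip}(\mathbb{R})$ its derivative is bounded, say $|\partial_x\sigma|\leq L$, and since $b\in C_{b,lip}(\mathbb{R})$ it is bounded, say $|b|\leq M$; therefore $\big|\int_0^{B_r}\partial_x\sigma(\varphi(y,V_r))\,dy\big|\leq L|B_r|$, which gives the pathwise estimate
\[
\Big|\exp\Big\{-\int_0^{B_r}\partial_x\sigma(\varphi(y,V_r))\,dy\Big\}\,b(\varphi(B_r,V_r))\Big|\leq M e^{L|B_r|}.
\]

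From this single bound both claims follow. For the modulus of continuity, integrating yields $|T^V_t-T^V_s|\leq M|t-s|\sup_{0\leq r\leq T}e^{L|B_r|}$, so that
\[
\hat{\mathbb{E}}[|T^V_t-T^V_s|^p]\leq M^p|t-s|^p\,\hat{\mathbb{E}}\Big[\sup_{0\leq r\leq T}e^{pL|B_r|}\Big].
\]
Applying Lemma \ref{lem5} with exponent $pL$ bounds the last factor by a constant depending only on $p$ (as $L$ is fixed), which yields the desired inequality with $C_p=M^p C_{pL}$.

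For the membership $V_t\in\mathbb{L}_G^p(\Omega_t)$ I would invoke the characterizations of $L^p_b(\Omega)$ and $\mathbb{L}^p_G(\Omega)$ recalled in Section 2. The same estimate gives $|V_t|\leq|X_0|+MT\sup_{0\leq r\leq T}e^{L|B_r|}$, and Lemma \ref{lem5} then shows $\hat{\mathbb{E}}[|V_t|^q]<\infty$ for every $q\geq1$. In particular the Markov-type bound $\hat{\mathbb{E}}[|V_t|^p I_{|V_t|\geq N}]\leq N^{-p}\hat{\mathbb{E}}[|V_t|^{2p}]\to0$ shows $V_t\in L^p_b(\Omega)$; since $V_t(\omega)$ was already seen to be continuous in $\omega$ on $(\Omega,\rho)$ it serves as its own quasi-continuous version, and because \eqref{ODE3} involves only $B_r(\omega)$ for $r\leq t$ the variable $V_t$ is $\mathcal{F}_t$-measurable. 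Hence $V_t\in\mathbb{L}_G^p(\Omega_t)$.

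The heart of the argument, and the step I would treat most carefully, is the uniform exponential control of the drift: once the integrand is dominated by $Me^{L|B_r|}$, Lemma \ref{lem5} does all the remaining work. The verification of the $L^p_b$ condition via Markov's inequality and the identification of a quasi-continuous version are then routine consequences of the boundedness of $b$ and $\sigma$ built into the simple case \eqref{GSDE3}.
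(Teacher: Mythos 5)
Your proof is correct and follows essentially the same route as the paper's: both arguments reduce everything to the pathwise exponential bound $Me^{L|B_r|}$ on the drift of \eqref{ODE3} and then invoke Lemma \ref{lem5}, together with the continuity of $V_t(\omega)$ on $(\Omega,\rho)$ and the pathwise characterization of $\mathbb{L}_G^p(\Omega_t)$ for the membership claim. Your write-up merely makes explicit the Markov-type verification of the $L^p_b$ condition that the paper leaves implicit.
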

\begin{proof}
By equation \eqref{ODE3},
\[
V_t=V_0+\int^t_0\exp\{-\int_{0}^{B_u(\omega)}\partial_x\sigma(\varphi(y,V_u))dy\}b(\varphi(B_u(\omega),V_u))du.
\]
Denote by  $C_p$  a constant depending only on $p$, which is
allowed to change from line to line. Then  applying Lemma \ref{lem5}, we conclude
\begin{align*}
\hat{\mathbb{E}}[\sup\limits_{0\leq t\leq T}|V_t|^p]&\leq C_p\hat{\mathbb{E}}[|V_0|^p+\int^T_0\exp\{-p\int_{0}^{B_u(\omega)}\partial_x\sigma(\varphi(y,V_u))dy\}du])
\\
& \leq C_p(|V_0|^p+C\hat{\mathbb{E}}[\sup\limits_{0\leq t\leq T}e^{Cp|B_t|}]) \leq C_p.
\end{align*}
Since $V_t(\omega)$ is a continuous function on $(\Omega,\rho)$, recalling the pathwise description of $\mathbb{L}_G^p(\Omega_t)$,
$V_t\in\mathbb{L}_G^p(\Omega_t)$ for each $p\geq 1$.

Note that
\[
T^V_t=\int^t_0\exp\{-\int_{0}^{B_u}\partial_x\sigma(\varphi(y,V_u))dy\}|b(\varphi(B_u,V_u))|du,
\]
applying Lemma \ref{lem5} again, we obtain
for each $s\leq t\in[0,T]$,
\[
\hat{\mathbb{E}}[|T^{V}_t-T^V_s|^p]\leq C_p|t-s|^p,
\]
which  completes the proof.
\end{proof}

By Lemma \ref{lw4}, we deduce that $\varphi(B_{t},V_t)\in M^2_G(0,T)$.
Since $\varphi$ satisfies the conditions of Theorem \ref{lw5}, applying $G$-It\^{o} formula, we get
\begin{flalign*}
   \begin{split}
    d\varphi(B_{t},V_t)&=\partial_x
\varphi(B_{t},V_t) dB_t +\partial_v\varphi(B_{t},V_t)
dV_t+\frac{1}{2}\partial^2_{xx}\varphi(B_{t},V_t)d\langle
B\rangle_t\\
    &=b(\varphi(B_{t},V_t))dt+\frac{1}{2}\partial_x\sigma(\varphi(B_{t},V_t))\sigma(\varphi(B_{t},V_t))d\langle B\rangle_t+\sigma(\varphi(B_{t},V_t))dB_t.
    \end{split}
\end{flalign*}
Consequently, $X_t=\varphi(B_{t},V_t)$ is the unique $M^2_G(0,T)$-solution
of $G$-SDE (\ref{GSDE3}).

\subsection{The general case}
In this section,  we will extend the above result to a more general case, where all the
coefficients are functions in $t, B_t$ and $x$.  Assume $b(t,x,y),h(t,x,y)\in C_{b,lip}([0,T]\times\mathbb{R}^2)$ and $\sigma(t,x,y)\in C_{b,lip}^{1}([0,T]\times\mathbb{R}^2)$.
It is obvious $G$-SDE
\begin{align}\label{GSDE4}
X_t=X_0+\int^t_0b(s,B_s,X_s)ds+\int^t_0h(s,B_s,X_s)d\langle B\rangle_s+\int^t_0\sigma(s,B_s,X_s)dB_s,\ t\in[0,T]
\end{align}
has a unique solution $X\in M^2_G(0,T)$.

Then the following ODE
\begin{equation}\label{zz1}
\frac{dy}{dx}=\sigma(t,x,y),\ \ y(t,0)=v\in\mathbb{R}
\end{equation}
admits a unique solution $y=\varphi(t,x,v)\in C([0,T]\times\mathbb{R}^2)$. Moreover, we can get
\[
\partial_v\varphi(t,x,v)=\exp\{\int_0^{x}\partial_y\sigma(t,u,\varphi(t,u,v))du\}
\]
and
\[
\partial_t\varphi(t,x,v)=\exp\{\int_0^{x}\partial_y\sigma(t,z,\varphi(t,z,v))dz\}(\int_0^{x}\partial_t\sigma(t,u,\varphi(t,u,v))e^{-\int_0^{u}\partial_y\sigma(t,z,\varphi(t,z,v))dz}du).
\]

Set
\begin{align*}
g(t,x,v):=\partial_v&\varphi^{-1}(t,x,v)(b(t,x,\varphi(t,x,v))-\partial_t\varphi(t,x,v)),\\
f(t,x,v):=\partial_v\varphi^{-1}(t,x,v)&(h(t,x,\varphi(t,x,v))-\frac{1}{2}(\partial_x\sigma+\partial_y\sigma\sigma)(t,x,\varphi(t,x,v))).
\end{align*}
Then consider the following initial value problem with parameter $\omega$:
\begin{equation}\label{ODE4}
\left\{
\begin{aligned}
dV_t&=g(t,B_t(\omega),V_t)dt+f(t,B_t(\omega),V_t)d\langle B\rangle_t(\omega),\\
V_0&=X_0.
\end{aligned}
\right.
\end{equation}
Note that $\langle B\rangle_t$ is a continuous finite variation process, then the ODE \eqref{ODE4}  has a unique solution $V=V_t(\omega)$ and
 $ V_t$ is a continuous finite variation process. Since $\langle B\rangle_t(\omega)$ is not always a deterministic process,
 in general we can not get $V_t(\omega)$ is a continuous function on $(\Omega,\rho)$ as the above section. However, we also have the following result.

\begin{lemma} \label{lw6} For each $p\geq 1$,
 there exists some constant $C_p$ depending only on $p$ such that, for each $s\leq t\in[0,T]$,
\[
\hat{\mathbb{E}}[|T^{V}_t-T^V_s|^p]\leq C_p|t-s|^p,
\]
where $T^V$ is the total variation process of $V$.
\end{lemma}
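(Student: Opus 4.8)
The plan is to follow the same strategy that worked in Lemma \ref{lw4}, but now accounting for the fact that $V_t$ is driven both by $dt$ and by $d\langle B\rangle_t$, and that the coefficients $g$ and $f$ involve $\partial_v\varphi^{-1}$, which is an exponential of an integral in the $B_t$-variable. First I would write down the total variation process explicitly from \eqref{ODE4}: since $V$ has the integral representation
\[
V_t = V_0 + \int_0^t g(u,B_u,V_u)\,du + \int_0^t f(u,B_u,V_u)\,d\langle B\rangle_u,
\]
its total variation over $[s,t]$ is bounded by
\[
T^V_t - T^V_s \leq \int_s^t |g(u,B_u,V_u)|\,du + \int_s^t |f(u,B_u,V_u)|\,d\langle B\rangle_u.
\]
The key observation is that $g$ and $f$ carry the factor $\partial_v\varphi^{-1}(t,x,v) = \exp\{-\int_0^{x}\partial_y\sigma(t,u,\varphi(t,u,v))\,du\}$, and since $\partial_y\sigma$ is bounded (because $\sigma \in C^1_{b,lip}$), this factor is dominated by $e^{C|B_u|}$ for some constant $C$. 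The remaining pieces — $b$, $h$, $\partial_t\varphi$, $\partial_x\sigma$, $\partial_y\sigma\,\sigma$ — are all bounded, so I would argue that $|g(u,B_u,V_u)| \leq C e^{C|B_u|}$ and $|f(u,B_u,V_u)| \leq C e^{C|B_u|}$ uniformly.

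Next I would apply the $p$-th power and the sublinearity of $\hat{\mathbb{E}}$. For the $dt$-term I use Hölder (or Jensen) in time to pull out a factor $|t-s|^{p-1}$ and reduce to $\hat{\mathbb{E}}[\int_s^t e^{Cp|B_u|}\,du]$; for the $d\langle B\rangle_u$-term I invoke the BDG-type inequality of Lemma 2.14, which gives
\[
\hat{\mathbb{E}}\Big[\,\Big|\int_s^t f(u,B_u,V_u)\,d\langle B\rangle_u\Big|^p\Big] \leq \bar{\sigma}^{2p}|t-s|^{p-1}\int_s^t \hat{\mathbb{E}}[|f(u,B_u,V_u)|^p]\,du.
\]
In both cases the problem is reduced to bounding $\hat{\mathbb{E}}[e^{Cp|B_u|}]$ uniformly in $u\in[0,T]$, and this is exactly what Lemma \ref{lem5} delivers (via $\hat{\mathbb{E}}[\sup_t e^{Cp|B_t|}] \leq C_p$). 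Combining the two estimates and absorbing all uniform constants into a single $C_p$ yields $\hat{\mathbb{E}}[|T^V_t - T^V_s|^p] \leq C_p|t-s|^p$.

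The main obstacle I anticipate is not the final integration but justifying the pointwise exponential bound on $g$ and $f$ rigorously, together with confirming that the relevant quantities lie in the right spaces so that Lemma 2.14 and the sublinear expectation estimates apply. Specifically, one must check that $V_u$ is well-defined and that $g(u,B_u,V_u)$, $f(u,B_u,V_u)$ belong to $M^p_G(0,T)$ (or at least are integrable enough for the BDG inequality); unlike in Lemma \ref{lw4}, here $V_t(\omega)$ need not be $\rho$-continuous on $\Omega$ because $\langle B\rangle$ is genuinely random, so the clean pathwise-continuity argument is unavailable and one must instead control everything through the capacity/expectation estimates directly. I would handle this by first establishing a uniform moment bound $\hat{\mathbb{E}}[\sup_{0\leq t\leq T}|V_t|^p] \leq C_p$ exactly as in Lemma \ref{lw4} — using Gronwall together with Lemma \ref{lem5} — and then feeding that bound, the boundedness of the coefficients, and the exponential control of $\partial_v\varphi^{-1}$ into the two-term splitting above. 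Once the exponential factor is tamed by Lemma \ref{lem5}, the remaining steps are routine applications of Hölder, Lemma 2.14, and sublinearity.
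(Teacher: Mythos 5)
Your proposal is correct and follows essentially the same route as the paper, whose proof of this lemma is the single line that it is immediate from Lemma \ref{lw4}: one bounds $|g|$ and $|f|$ by $Ce^{C|x|}$, splits $T^V_t-T^V_s$ into the $du$ and $d\langle B\rangle_u$ contributions, and controls both via H\"older, the BDG-type estimate for $\int \cdot\, d\langle B\rangle$ (or simply $d\langle B\rangle_u=\hat{\alpha}_u\,du$ with $\hat{\alpha}\in[\underline{\sigma}^2,\overline{\sigma}^2]$), and Lemma \ref{lem5}. One small correction: $\partial_t\varphi$ itself is not bounded (it grows like $|x|e^{C|x|}$), but the product $\partial_v\varphi^{-1}(t,x,v)\,\partial_t\varphi(t,x,v)=\int_0^{x}\partial_t\sigma(t,u,\varphi(t,u,v))e^{-\int_0^{u}\partial_y\sigma(t,z,\varphi(t,z,v))dz}\,du$ is still dominated by $Ce^{C|x|}$, so your stated bounds on $g$ and $f$, and hence the conclusion, remain valid.
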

\begin{proof}
 The proof is immediate in light of Lemma \ref{lw4}.
\end{proof}

Now we shall give the main result of this section.
\begin{theorem}
 Assume  $b(t,x,y),h(t,x,y)\in C_{b,lip}([0,T]\times\mathbb{R}^2)$ and $\sigma(t,x,y)\in C_{b,lip}^{1}([0,T]\times\mathbb{R}^2)$, then for each $p\geq 1$, $V_t\in\mathbb{L}^p_G(\Omega_t)$ and $\varphi(t,B_t,V_t)$ is the unique $M_G^2(0,T)$-solution of $G$-SDE \eqref{GSDE4}.
\end{theorem}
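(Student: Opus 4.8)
The plan is to reproduce, in the time-and-$B_t$-dependent setting, the three-step scheme already carried out for the simple equation \eqref{GSDE3}: establish $V_t\in\mathbb{L}^p_G(\Omega_t)$, deduce $\varphi(t,B_t,V_t)\in M^2_G(0,T)$, and then verify by the $G$-It\^o formula that $\varphi(t,B_t,V_t)$ solves \eqref{GSDE4}, concluding by the uniqueness in Theorem \ref{GSDE2}. The only genuinely new point is the membership $V_t\in\mathbb{L}^p_G(\Omega_t)$: as noted before the statement, $V_t(\omega)$ is no longer a continuous function on $(\Omega,\rho)$ because the driver $\langle B\rangle$ enters \eqref{ODE4}, so this membership can no longer be read off from pathwise continuity and must instead be obtained by approximation and completeness.

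First I would prove $V_t\in\mathbb{L}^p_G(\Omega_t)$ for every $p\geq1$ by a Picard iteration. Set $V^0_t\equiv X_0$ and define
\[
V^{n+1}_t=X_0+\int_0^t g(s,B_s,V^n_s)\,ds+\int_0^t f(s,B_s,V^n_s)\,d\langle B\rangle_s .
\]
By induction each $V^n_t$ lies in $\mathbb{L}^p_G(\Omega_t)$: one has $B_s,\langle B\rangle_s\in\mathbb{L}^q_G(\Omega_s)$ for all $q$, and since $g(s,\cdot,\cdot),f(s,\cdot,\cdot)$ are continuous and locally Lipschitz in their last two arguments, $g(s,B_s,V^n_s)$ and $f(s,B_s,V^n_s)$ belong to $M^p_G(0,T)$, whence the $ds$- and $d\langle B\rangle$-integrals return elements of $\mathbb{L}^p_G(\Omega_t)$ (the latter by the definition of $Q$ and the BDG-type estimate recalled in Section 2). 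One then shows, exactly as in the proof of Lemma \ref{lw4}, that $\hat{\mathbb{E}}[\sup_{s\leq t}|V^{n+1}_s-V^n_s|^p]$ tends to $0$ geometrically in $n$ via a Gronwall argument, so that $(V^n_t)_n$ is Cauchy and its limit $V_t$ belongs to $\mathbb{L}^p_G(\Omega_t)$ since this space is a Banach space.

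The main obstacle lies precisely in this step, and it is analytic rather than conceptual: the coefficients $g,f$ are \emph{not} bounded. Indeed $\partial_v\varphi^{-1}=\exp\{-\int_0^x\partial_y\sigma(t,u,\varphi)\,du\}$ and the explicit expression for $\partial_t\varphi$ both grow exponentially in $|x|$, so when evaluated at $x=B_s$ they are controlled only through the exponential moment bound $\hat{\mathbb{E}}[\sup_{0\le t\le T}e^{p|B_t|}]\le C_p$ of Lemma \ref{lem5}, combined with H\"older's inequality across a range of $\mathbb{L}^q_G$-norms of the iterates $V^n$. The same device yields both the uniform bound $\hat{\mathbb{E}}[\sup_{s\le T}|V_s|^p]<\infty$ and the contraction estimate above; I expect the bookkeeping of these exponential factors to be the most delicate part.

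Having secured $V_t\in\mathbb{L}^p_G(\Omega_t)$, together with the total-variation estimate of Lemma \ref{lw6} and the regularity of $\varphi$, I would deduce $\varphi(t,B_t,V_t)\in M^2_G(0,T)$ just as in the simple case. It then remains to apply the $G$-It\^o formula (Theorem \ref{lw5}), whose smoothness hypotheses $\varphi$ meets because $\sigma\in C^1_{b,lip}$:
\[
d\varphi(t,B_t,V_t)=\partial_t\varphi\,dt+\partial_x\varphi\,dB_t+\partial_v\varphi\,dV_t+\frac{1}{2}\partial^2_{xx}\varphi\,d\langle B\rangle_t .
\]
Substituting $dV_t=g\,dt+f\,d\langle B\rangle_t$ from \eqref{ODE4} and using $\partial_x\varphi=\sigma(t,B_t,\varphi)$ and $\partial^2_{xx}\varphi=(\partial_x\sigma+\partial_y\sigma\,\sigma)(t,B_t,\varphi)$ from \eqref{zz1} together with the definitions of $g$ and $f$, the $dt$-coefficient collapses to $b$, the $d\langle B\rangle_t$-coefficient to $h$, and the $dB_t$-coefficient to $\sigma$. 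Since $\varphi(0,0,X_0)=X_0$, the process $\varphi(t,B_t,V_t)$ solves \eqref{GSDE4}, and Theorem \ref{GSDE2} then identifies it with the unique $M^2_G(0,T)$-solution $X$.
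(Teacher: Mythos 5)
Your overall architecture (ODE solution, membership in a suitable space, $G$-It\^o formula, identification by uniqueness) matches the paper, and your final It\^o computation is correct. But there is a genuine gap in the step you yourself identify as the crux, namely the proof that $V_t\in\mathbb{L}^p_G(\Omega_t)$, and the difficulty there is not mere bookkeeping. The Lipschitz constants of $g(s,x,\cdot)$ and $f(s,x,\cdot)$ in $v$ grow like $e^{C|x|}$, so the contraction estimate for your Picard scheme reads, pathwise, $\sup_{s\le t}|V^{n+1}_s-V^n_s|\le \frac{(M_T t)^n}{n!}\sup_{s\le t}|V^1_s-V^0_s|$ with $M_T=C\sup_{s\le T}e^{C|B_s|}$. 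Taking $\hat{\mathbb{E}}[\,\cdot\,^p]$ then requires a bound on $\hat{\mathbb{E}}[\sup_{s\le T}e^{Cnp|B_s|}]$, and the constant $C_q$ of Lemma \ref{lem5} grows like $e^{cq^2}$ (it is essentially a Gaussian exponential moment), so $C_{Cnp}\sim e^{cn^2}$ overwhelms $(n!)^{-p}\sim e^{-pn\log n}$: the series does not converge, and H\"older only shifts the problem to ever higher moments of the iterates. To salvage a direct argument you would need an extra device, e.g. localization on $\{\sup_{t\le T}|B_t|\le R\}$ together with the uniform pathwise bound $|V^n_t|\le C(1+\sup_s e^{C|B_s|})$ and a convergence-in-capacity-plus-uniform-integrability argument; as written, the "geometric Gronwall contraction" step would fail.

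The paper avoids this entirely by reversing the order of your first and third steps, and this is worth internalizing. The It\^o formula in the Appendix (Theorem \ref{lw5}) is deliberately stated for a finite-variation process $V_t$ lying only in the \emph{larger} space $L^p_b(\Omega_t)$, with the total-variation estimate of Lemma \ref{lw6}; membership in $L^p_b(\Omega_t)$ is immediate from the pathwise domination $|V_t|\le |X_0|+C(1+\overline{\sigma}^2)T\sup_{s\le T}e^{C|B_s|}$ and Lemma \ref{lem5}, since no quasi-continuity is required there. One then applies It\^o, identifies $\varphi(t,B_t,V_t)=X_t$ q.s. by a Gronwall estimate on $\hat{\mathbb{E}}[|\varphi(t,B_t,V_t)-X_t|^2]$ (working in $L^2_b$, so no prior $M^2_G$-membership of $\varphi(t,B_t,V_t)$ is needed), and only \emph{afterwards} obtains $V_t\in\mathbb{L}^p_G(\Omega_t)$ for free from the flow inversion $v=\varphi(t,-x,\varphi(t,x,v))$: this gives $V_t=\varphi(t,-B_t,X_t)$ q.s., a continuous function of the quasi-continuous random variables $B_t$ and $X_t$, hence quasi-continuous and, with the moment bounds, in $\mathbb{L}^p_G(\Omega_t)$. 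You should either adopt this a posteriori inversion or supply the localization argument sketched above; without one of the two, the central claim $V_t\in\mathbb{L}^p_G(\Omega_t)$ is not established.
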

\begin{proof}
It is obvious $V_t\in L^p_b(\Omega_t)$.
Then  applying Theorem \ref{lw5}, we obtain q.s.
\begin{flalign*}
   \begin{split}
    d\varphi(t,B_t,V_t)&=\partial_t\varphi(t,B_t,V_t)dt+\partial_x
\varphi(t,B_{t},V_t) dB_t +\partial_v\varphi(t,B_{t},V_t)
dV_t+\frac{1}{2}\partial^2_{xx}\varphi(t,B_{t},V_t)d\langle
B\rangle_t\\&=b(t,B_t,\varphi(t,B_t,V_t))dt+h(t,B_t,\varphi(t,B_t,V_t))d\langle
B\rangle_t+\sigma(t,B_t,\varphi(t,B_t,V_t))dB_t.
    \end{split}
\end{flalign*}
By a standard argument, there exists some constant $C$ such that,
\[
\hat{\mathbb{E}}[|\varphi(t,B_t,V_t)-X_t|^2]\leq C\int^t_0\hat{\mathbb{E}}[|\varphi(s,B_s,V_s)-X_s|^2]ds.
\]
Applying  Gronwall's lemma, we obtain $\varphi(t,B_t,V_t)=X_t $, q.s..

By the uniqueness of solution of ODE (\ref{zz1}),
\begin{equation*}
    v=\varphi(t,-x,\varphi(t,x,v)),
\end{equation*}
thus, $V_t=\varphi(t,-B_t,X_t)$ q.s.. In particular, $V_t$ has a quasi-continuous version and $V_t\in\mathbb{L}^p_G(\Omega_t)$. The proof is completed.
\end{proof}
\section{$G$-diffusion process}
The objective of this section is to  remove the condition  that $\sigma$ is continuously differentiable and to obtain a more general result on this topic. By an approximation approach, we can also represent the solution of $G$-SDE as a function of $B_t$ and a continuous finite variation process $V_t$ as the above section.

\begin{theorem}\label{lw3}
If $b,\sigma,h\in C_{b,lip}(\mathbb{R})$, then there exists a unique continuous finite variation process $V_t\in \mathbb{L}_G^p(\Omega_t)$ for each $p\geq 1$ such that,
\[
X_t=\varphi(B_t,V_t),
\]
where $\varphi$ is the solution of the ODE:
\begin{equation*}
\partial_x\varphi(x,v)=\sigma(\varphi(x,v)),\ \ \ \varphi(0,v)=v.
\end{equation*}
Moreover if $\sigma\in C^1_{b,lip}(\mathbb{R})$, then for q.s. $\omega$, $V_t(\omega)$ is the solution of the following ODE:
\begin{equation}\label{ODE5}
\left\{
\begin{aligned}
dV_t&=\exp\{-\int_{0}^{B_t}\partial_x\sigma(\varphi(y,V_t))dy\}[b(\varphi(B_t,V_t))dt+(h(\varphi(B_t),V_t))-\frac{1}{2}
\partial_x\sigma\sigma(\varphi(B_t,V_t))d\langle B \rangle_t)],\\
V_0&=X_0.
\end{aligned}
\right.
\end{equation}

\end{theorem}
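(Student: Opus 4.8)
The plan is to deduce the general case, in which $\sigma$ is merely Lipschitz, from the $C^1_{b,lip}$ case already settled in Section~3 by a mollification argument; the only genuinely new point is the finite variation of the limiting process. First I would observe that for $\sigma\in C_{b,lip}(\mathbb{R})$ the autonomous ODE $\partial_x\varphi=\sigma(\varphi)$, $\varphi(0,v)=v$, still admits by Cauchy--Lipschitz a unique flow $\varphi\in C(\mathbb{R}^2)$ which is a homeomorphism in $v$ and satisfies the reversibility relation $\varphi(-x,\varphi(x,v))=v$. I would therefore simply \emph{define} $V_t:=\varphi(-B_t,X_t)$, where $X$ is the solution of the corresponding $G$-SDE (Theorem~\ref{GSDE2}). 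The representation $X_t=\varphi(B_t,V_t)$ and the continuity of $V$ then follow immediately from the flow property and the joint continuity of $\varphi$, so the remaining tasks are to show $V_t\in\mathbb{L}_G^p(\Omega_t)$, that $V$ has finite variation with $\hat{\mathbb{E}}[|T^V_t-T^V_s|^p]\le C_p|t-s|^p$, and uniqueness.

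Next I would mollify, choosing $\sigma_n:=\sigma*\rho_n\in C^1_{b,lip}(\mathbb{R})$ with $\sigma_n\to\sigma$ uniformly and, crucially, with the uniform bound $|\partial_x\sigma_n|\le K$, where $K$ is the Lipschitz constant of $\sigma$. Writing $\varphi_n$ for the corresponding flow and $X^n$ for the solution of the $G$-SDE obtained by replacing $\sigma$ with $\sigma_n$, Section~3 applies to each fixed $n$ and yields $X^n_t=\varphi_n(B_t,V^n_t)$ with $V^n$ a continuous finite variation process in $\mathbb{L}_G^p(\Omega_t)$ solving \eqref{ODE5} for $\sigma_n$. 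The essential point is that the estimates of Lemmas~\ref{lw4} and~\ref{lw6} are \emph{uniform in $n$}: the exponential weights appearing there obey $|\int_0^{B_t}\partial_x\sigma_n(\varphi_n(y,\cdot))\,dy|\le K|B_t|$, so by Lemma~\ref{lem5} one gets $\hat{\mathbb{E}}[\sup_t|V^n_t|^p]\le C_p$ and $\hat{\mathbb{E}}[|T^{V^n}_t-T^{V^n}_s|^p]\le C_p|t-s|^p$ with $C_p$ independent of $n$.

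Then I would pass to the limit. A standard $G$-SDE stability estimate, splitting $\sigma_n(X^n)-\sigma(X)=(\sigma_n(X^n)-\sigma_n(X))+(\sigma_n(X)-\sigma(X))$ and applying the Burkholder--Davis--Gundy type inequalities of Section~2 together with Gronwall's lemma, gives $\hat{\mathbb{E}}[\sup_t|X^n_t-X_t|^2]\le C\|\sigma_n-\sigma\|_\infty^2\to 0$. Combining the continuous dependence of the flows, $|\varphi_n(x,v)-\varphi(x,v)|\le C\|\sigma_n-\sigma\|_\infty e^{K|x|}$, with the $v$-Lipschitz bound $|\varphi(x,v)-\varphi(x,v')|\le e^{K|x|}|v-v'|$ and the exponential moments of Lemma~\ref{lem5}, I would conclude that $V^n_t=\varphi_n(-B_t,X^n_t)\to\varphi(-B_t,X_t)=V_t$ in $\|\cdot\|_p$; since $\mathbb{L}_G^p(\Omega_t)$ is closed, this gives $V_t\in\mathbb{L}_G^p(\Omega_t)$. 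For the variation bound I would exploit the lower semicontinuity of total variation under the convergence $V^n\to V$: for each partition the increment sum for $V$ is the limit of those for $V^n$, so a Fatou argument in the sublinear setting transfers $\hat{\mathbb{E}}[|T^V_t-T^V_s|^p]\le C_p|t-s|^p$ to the limit and in particular shows $V$ is of finite variation.

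Finally, uniqueness is immediate: if $\varphi(B_t,V_t)=\varphi(B_t,V'_t)$ with $V,V'$ both continuous and of finite variation, injectivity of $\varphi(x,\cdot)$ forces $V=V'$ q.s. The \emph{moreover} assertion is not new, being precisely \eqref{ODE4} specialized to coefficients independent of the explicit $(t,x)$ variables, which was established in Section~3. I expect the main obstacle to be the finite variation step: since $\varphi$ need not be $C^2$, the $G$-It\^{o} route of Section~3 is unavailable, so everything hinges on keeping the variation estimates uniform in $n$ (which is exactly why the uniform bound $|\partial_x\sigma_n|\le K$ is indispensable) and on correctly transferring the total-variation bound through the lower-semicontinuity argument within the $G$-expectation framework.
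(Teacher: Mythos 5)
Your proposal is correct and follows essentially the same route as the paper: mollify $\sigma$ to $\sigma^n\in C^1_{b,lip}$ with $\|\sigma^n-\sigma\|_\infty\le K/n$, apply the Section~3 representation to each $X^n_t=\varphi^n(B_t,V^n_t)$, prove $\hat{\mathbb{E}}[\sup_t|X^n_t-X_t|^2]\to 0$ by BDG and Gronwall, define $V_t:=\varphi(-B_t,X_t)$, and pass to the limit using the flow estimates. The only (minor) divergence is the finite-variation step, where the paper avoids your Fatou/lower-semicontinuity argument by noting the pathwise bound $|V^n_t-V^n_s|\le C\sup_{u}e^{C|B_u|}\,|t-s|$ uniformly in $n$, which survives the limit and gives finite variation of $V$ directly.
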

\begin{proof}If $\sigma\in C^1_{b,lip}(\mathbb{R})$, then the theorem holds true.
If $\sigma\in C_{b,lip}(\mathbb{R})$,  one can define \[\sigma^n(x):=\int_{\mathbb{R}}\sigma(y)\rho_n(y-x)dy=\int_{\mathbb{R}}\sigma(y+x)\rho_n(y)dy,\]
where $\rho_n$ is a nonnegative $C^{\infty}$ function defined on $\{x: |x|\leq \frac{1}{n}\}$ with $\int_{\mathbb{R}}\rho_n(y)dy=1$.
From this definition, we conclude that
\[
|\sigma^{n}(x)-\sigma(x)|\leq\int_{\mathbb{R}}|\sigma(y+x)-\sigma(x)|\rho_{n}(y)dy\leq\int_{\mathbb{R}}K|y|\rho_n(y)dy\leq\frac{K}{n},
\]
where $K$ is the Lipschitz coefficient of $b,h$ and $\sigma$.

For each $n$, it is obvious $\sigma^n \in C^1_{b,lip}(\mathbb{R})$. Thus, $ X^n_t:=\varphi^n(B_t,V^n_t)$ is the solution of $G$-SDE:
\begin{align*}
X^n_t=X_0+\int^t_0b(X^n_s)ds+\int^t_0h(X^n_s)d\langle B\rangle_s+\int^t_0\sigma^n(X^n_s)dB_s, \ t\in[0,T],
\end{align*}
where $\varphi^n$ satisfies
\begin{equation*}
\partial_x\varphi^n(x,v)=\sigma^n(\varphi^n(x,v)),\ \ \ \varphi^n(0,v)=v
\end{equation*}
and \begin{equation*}
\left\{
\begin{aligned}
dV^n_t&=\exp\{-\int_{0}^{B_t}\partial_x\sigma^{n}(\varphi^n(y,V^n_t))dy\}[b(\varphi^n(B_t,V^n_t))dt+(h(\varphi^n(B_t,V^n_t))\\
&\ \ \ \ \ \ \ \ \ \ -\frac{1}{2}
\partial_x\sigma^{n}\sigma^{n}(\varphi^n(B_t,V^n_t))d\langle B \rangle_t)],\\
V^n_0&=X_0.
\end{aligned}
\right.
\end{equation*}

For each $n$, there exists  some constant $C$ depending only on $T$ and  $K$ such that,
\begin{align*}
\hat{\mathbb{E}}[\sup\limits_{t\in[0,T]}|X^n_t-X_t|^2]\leq \frac{C}{n^2}.
\end{align*}
Indeed,  applying BDG inequalities, we obtain for some constant $C$, which is allowed to change from line to
line,
\begin{align*}
\hat{\mathbb{E}}[\sup\limits_{t\in[0,T]}|X^{n}_t-X_t|^2]&\leq \hat{\mathbb{E}}[\sup\limits_{t\in[0,T]}|\int^t_0b(X^n_s)-b(X_s)ds+\int^t_0h(X^n_s)-h(X_s)d\langle B\rangle_s+\int^t_0\sigma^n(X^n_s)-\sigma(X_s)dB_s|^2]\\
&\leq C\hat{\mathbb{E}}[(\int_{0}^T K|X^{n}_t-X_t|ds)^2+(\int_{0}^T K|X^{n}_t-X_t|d\langle B\rangle_s)^2+(\int_{0}^T (K|X^{n}_t-X_t|+\frac{K}{n})dB_s)^2]\\
&\leq C(\frac{1}{n^2}+\int_{0}^{T}\hat{\mathbb{E}}[|X^{n}_t-X_t|^2]dt)\\
&\leq C(\frac{1}{n^2}+\int_{0}^{T}\hat{\mathbb{E}}[\sup\limits_{s\in[0,t]}|X^{n}_s-X_s|^2]dt).
\end{align*}
By Gronwall's lemma, we can get the desired result.
Moreover, choosing a subsequence if necessary, $X^n\rightarrow X$ uniformly in $[0,T]$ q.s..

For each $v_1,v_2,x\in\mathbb{R}$,
\[
|\varphi^n(x,v_1)-\varphi(x,v_2)|\leq |\varphi^n(x,v_1)-\varphi^n(x,v_2)|+|\varphi^n(x,v_2)-\varphi(x,v_2)|.
\]
Applying Taylor formula yields that
\[
|\varphi^n(x,v_1)-\varphi^n(x,v_2)|\leq|\partial_{v}\varphi^{n}(x,v^*)||v_1-v_2|\leq|v_1-v_2|e^{C|x|}.
\]
By the definitions of $\varphi^n$ and $\varphi$,  we obtain
\[
|\varphi^n(x,v_2)-\varphi(x,v_2)|\leq|\int_{0}^{x}\sigma^n(\varphi^n(s,v_2))-\sigma(\varphi(s,v_2))ds|\leq\int_{0}^{|x|}K(|\varphi^n(s,v_2)-\varphi(s,v_2)|+\frac{1}{n})ds.
\]
From Gronwall's lemma, we conclude  for some constant $C$
\[
|\varphi^n(x,v_1)-\varphi(x,v_2)|\leq C(|v_1-v_2|+\frac{|x|}{n})e^{C|x|}.
\]
Define $V_t:=\varphi(-B_t,X_t)$, thus $X_t=\varphi(B_t,V_t)$ and $V_t$ has a quasi-continuous version.
Moreover, \begin{align*}
\lim\limits_{n\rightarrow\infty}\sup\limits_{t\in[0,T]}|V^n_t-V_t|&=\lim\limits_{n\rightarrow\infty}\sup\limits_{t\in[0,T]}|\varphi^n(-B_t,X^n_t)-\varphi(-B_t,X_t)|\\
&\leq C\lim\limits_{n\rightarrow\infty}(\sup\limits_{t\in[0,T]}|X^n_t-X_t|+\sup\limits_{t\in[0,T]}\frac{|B_t|}{n})e^{C\sup\limits_{t\in[0,T]}|B_t|}=0.
\end{align*}
Since for each $n$ and $t,s\in[0,T]$, there exists some constant $C$ such that \[
|V^n_t-V^n_s|\leq C\sup\limits_{t\in[0,T]}e^{C|B_t|}|t-s|.
\]
Thus\[
|V_t-V_s|\leq C\sup\limits_{t\in[0,T]}e^{C|B_t|}|t-s|.
\]

By the pathwise description of $\mathbb{L}_G^p(\Omega_t)$, $V_t\in \mathbb{L}_G^p(\Omega_t)$ for each $p\geq 1$ and the proof is completed.
\end{proof}

In general, we can also get
\begin{theorem}\label{lw8}
If $b,\sigma,h\in C_{b,lip}([0,T]\times\mathbb{R}^2)$, then there exists a unique continuous finite variation process  $V_t\in \mathbb{L}_G^p(\Omega_t)$ for each $p\geq 1$ such that
\[
X_t=\varphi(t,B_t,V_t),
\]
where $\varphi$ is given by equation \eqref{zz1}.
Moreover if $\sigma\in C_{b,lip}^1([0,T]\times\mathbb{R}^2)$, then for q.s. $\omega$, $V_t$ is the solution of ODE \eqref{ODE4}.
\end{theorem}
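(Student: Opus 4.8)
The plan is to reduce Theorem~\ref{lw8} to two situations that are already settled and to bridge between them by mollification, carrying out in the time- and $B_t$-dependent setting of \eqref{GSDE4} the same passage from the $C^1$ case to the general case that is used in Theorem~\ref{lw3}. When $\sigma\in C^1_{b,lip}([0,T]\times\mathbb{R}^2)$ there is nothing to prove: the result of Section~3 for the general case with $\sigma\in C^1_{b,lip}$ already provides $X_t=\varphi(t,B_t,V_t)$ with $V_t\in\mathbb{L}^p_G(\Omega_t)$ solving \eqref{ODE4}, which is exactly the ``moreover'' assertion. Hence the entire content lies in removing the differentiability of $\sigma$.

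First I would regularize. Mollifying $\sigma$ in all of its variables produces $\sigma^n\in C^1_{b,lip}([0,T]\times\mathbb{R}^2)$ with $\sup|\sigma^n-\sigma|\leq C/n$ and Lipschitz constants bounded uniformly in $n$. Let $X^n$ solve \eqref{GSDE4} with $\sigma$ replaced by $\sigma^n$, and let $\varphi^n$ be the associated flow \eqref{zz1}; by the $C^1$ case just recalled, $X^n_t=\varphi^n(t,B_t,V^n_t)$ with $V^n$ solving the corresponding \eqref{ODE4}. Applying the two BDG-type inequalities of Section~2 to the difference $X^n-X$, using $\sup|\sigma^n-\sigma|\leq C/n$ together with the common Lipschitz bound, and then invoking Gronwall's lemma, gives $\hat{\mathbb{E}}[\sup_{t\in[0,T]}|X^n_t-X_t|^2]\leq C/n^2$; along a subsequence, $X^n\to X$ uniformly on $[0,T]$, q.s.

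Next I would control the flows. Since $\partial_v\varphi^n(t,x,v)=\exp\{\int_0^x\partial_y\sigma^n(t,u,\varphi^n(t,u,v))\,du\}$ is bounded by $e^{C|x|}$, a mean value argument in $v$ combined with the Gronwall estimate for $|\varphi^n(t,x,v)-\varphi(t,x,v)|$ coming from $\sup|\sigma^n-\sigma|\leq C/n$ yields $|\varphi^n(t,x,v_1)-\varphi(t,x,v_2)|\leq C(|v_1-v_2|+|x|/n)e^{C|x|}$, uniformly in $t$. Using the inversion identity $v=\varphi(t,-x,\varphi(t,x,v))$, valid by uniqueness for \eqref{zz1}, I would define $V_t:=\varphi(t,-B_t,X_t)$, so that $X_t=\varphi(t,B_t,V_t)$ and $V_t$, being a continuous function of $(t,B_t,X_t)$ with $X_t\in\mathbb{L}^2_G(\Omega_t)$, has a quasi-continuous version. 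Feeding the flow estimate with $v_1=X^n_t$, $v_2=X_t$ and bounding $e^{C\sup_{t\in[0,T]}|B_t|}$ by Lemma~\ref{lem5} then gives $V^n\to V$ uniformly, q.s.

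The regularity of the limit is the crux. For each $n$ the coefficients of the ODE \eqref{ODE4} for $V^n$ are bounded by $Ce^{C|B_t|}$, so that $|V^n_t-V^n_s|\leq C\sup_{u\in[0,T]}e^{C|B_u|}(|t-s|+|\langle B\rangle_t-\langle B\rangle_s|)\leq C\sup_{u\in[0,T]}e^{C|B_u|}|t-s|$, since $\langle B\rangle$ is absolutely continuous with density at most $\overline{\sigma}^2$; passing to the limit gives the same Lipschitz-in-time bound for $V$, so $V$ is a continuous finite variation process whose total variation obeys the estimate of Lemma~\ref{lw6}. Combined with the exponential moment bound of Lemma~\ref{lem5} and the quasi-continuity established above, the pathwise characterization of $\mathbb{L}^p_G(\Omega_t)$ yields $V_t\in\mathbb{L}^p_G(\Omega_t)$ for every $p\geq 1$. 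The main obstacle is precisely this last point: because $\langle B\rangle$ is not deterministic, $V_t(\omega)$ is not a $\rho$-continuous functional of $\omega$, in contrast to the simple case of Section~3, so membership in $\mathbb{L}^p_G$ cannot be read off directly and must be extracted from the representation $V_t=\varphi(t,-B_t,X_t)$ together with moment and time-regularity estimates that are uniform in $n$ and therefore survive the passage to the limit.
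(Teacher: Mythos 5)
Your proposal is correct and follows exactly the route the paper intends: the paper states Theorem~\ref{lw8} without proof, leaving it as the time- and $B_t$-dependent analogue of the argument given in full for Theorem~\ref{lw3}, and your mollification of $\sigma$, the Gronwall estimate $\hat{\mathbb{E}}[\sup_t|X^n_t-X_t|^2]\leq C/n^2$, the flow comparison, the definition $V_t:=\varphi(t,-B_t,X_t)$ via the inversion identity, and the uniform-in-$n$ Lipschitz-in-time bound (using that $\langle B\rangle$ has density in $[\underline{\sigma}^2,\overline{\sigma}^2]$) reproduce that argument step for step. No gaps.
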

\section{Comparison Theorem for $G$-SDEs}
In the above sections, we establish the relations between $G$-SDEs and ODEs. From these results, we shall study the comparison theorem for $G$-SDEs.  We refer to
 Lin \cite{LQ} for some sufficient condition under which a comparison theorem  for $G$-SDEs is also  obtained  by virtue of a stochastic calculus approach.

We begin with a lemma, which is from \cite{Huang1}.
\begin{lemma}\label{lw1}
Assume that two functions $f(t,x)$ and $\tilde{f}(t,x)$ are defined on $\mathbb{R}^2$, satisfying the Carath\'{e}odory condintions, that is, they are measurable in $t$, continuous in $x$ and dominated by a locally integrable function $m_t$ in $\mathbb{R}^2$. Let $(t_0,x_0)$, $(t_0,\tilde{x}_0)$ be two points in $\mathbb{R}^2$ such that $x_0\leq\tilde{x}_0$. Moreover, $x_t$ is a  solution to the initial value problem
\begin{align*}
dx_t=f(t,x_t)dt,~~~x_{t_0}=x_0,
\end{align*}
and $\tilde{x}_t$ is the maximal solution to the problem
\begin{align*}
d\tilde{x}_t=\tilde{f}(t,\tilde{x}_t)dt,~~~\tilde{x}_{t_0}=\tilde{x}_0.
\end{align*}
If the inequality
\begin{align*}
(t-t_0)f(t,x)\leq (t-t_0)\tilde{f}(t,x)
\end{align*}
holds a.e. in $\mathbb{R}^2$, then $x(t)\leq\tilde{x}(t)$ for every $t$ in the common interval of existence of the solutions $x_t$ and $\tilde{x}_t$.
\end{lemma}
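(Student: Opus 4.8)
The plan is to deduce this comparison principle from the classical theory of one-sided differential inequalities, separating the two time directions by the sign of $t-t_0$. The hypothesis $(t-t_0)f(t,x)\le(t-t_0)\tilde f(t,x)$ is equivalent to the pair of pointwise inequalities $f(t,x)\le\tilde f(t,x)$ for $t>t_0$ and $f(t,x)\ge\tilde f(t,x)$ for $t<t_0$, holding a.e. Hence on the forward interval $\{t\ge t_0\}$ the absolutely continuous solution $x_t$ satisfies $x_t'=f(t,x_t)\le\tilde f(t,x_t)$ for a.e. $t$, i.e. it is a \emph{subsolution} of the equation defining the maximal solution $\tilde x_t$; on the backward interval the reflection $s\mapsto -t$ turns the reversed inequality into a subsolution property of exactly the same type, so it suffices to treat $t\ge t_0$ and then apply the identical argument to the reflected problem.

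For $t\ge t_0$ I would establish $x_t\le\tilde x_t$ by the perturbation-and-first-crossing method. For $\varepsilon>0$ let $\tilde x^{\varepsilon}$ be a Carath\'{e}odory solution of $d\tilde x^{\varepsilon}_t=(\tilde f(t,\tilde x^{\varepsilon}_t)+\varepsilon)\,dt$ with $\tilde x^{\varepsilon}_{t_0}=\tilde x_0+\varepsilon$; such solutions exist on a common subinterval, are monotone in $\varepsilon$, and converge to the maximal solution $\tilde x$ as $\varepsilon\downarrow0$. I would then show the strict inequality $x_t<\tilde x^{\varepsilon}_t$ throughout. It holds at $t_0$ because $x_{t_0}=x_0\le\tilde x_0<\tilde x_0+\varepsilon$. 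If it ever failed, continuity of both curves would give a first time $t_1>t_0$ with $x_{t_1}=\tilde x^{\varepsilon}_{t_1}$ and $x_t<\tilde x^{\varepsilon}_t$ on $[t_0,t_1)$. Writing the two equations in integral form over $[t,t_1]$ and subtracting, the gap $g_t:=\tilde x^{\varepsilon}_t-x_t$ satisfies $-g_t=\int_t^{t_1}\bigl(\tilde f(s,\tilde x^{\varepsilon}_s)-f(s,x_s)+\varepsilon\bigr)\,ds$, whose left side is negative, while the integrand, using $f(s,x_s)\le\tilde f(s,x_s)$, is bounded below by $\tilde f(s,\tilde x^{\varepsilon}_s)-\tilde f(s,x_s)+\varepsilon$; since $g_s\to0$ as $s\to t_1^-$, this lower bound is positive near $t_1$, a contradiction. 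Letting $\varepsilon\downarrow0$ then yields $x_t\le\tilde x_t$ for $t\ge t_0$.

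The main obstacle will be carrying out the first-crossing step rigorously under the mere Carath\'{e}odory hypotheses, where $f$ and $\tilde f$ are only measurable in $t$ and the equations hold just almost everywhere. In particular the instantaneous comparison of slopes at $t_1$ has to be replaced by the integral estimate above, and the term $\tilde f(s,\tilde x^{\varepsilon}_s)-\tilde f(s,x_s)$ must be controlled uniformly for $s$ near $t_1$ using only continuity in the state variable together with the integrable domination by $m_s$; this is the genuinely delicate part. A secondary point is the bookkeeping in the backward direction, where one must verify that under the reflection $s\mapsto -t$ the maximal solution $\tilde x$ plays the correct extremal role for the reversed equation, after which the reversed inequality $f\ge\tilde f$ reduces that case to the forward one already treated. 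Combining the two half-intervals at $t_0$ finally gives $x_t\le\tilde x_t$ on the whole common interval of existence.
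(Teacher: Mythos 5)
First, a point of reference: the paper does not prove this lemma at all --- it is imported verbatim from Huang \cite{Huang1} and used as a black box --- so there is no in-paper proof to compare against, and your proposal has to be judged on its own terms. Your overall plan is the classical one and its skeleton is sound: the hypothesis $(t-t_0)f\leq(t-t_0)\tilde f$ does split into $f\leq\tilde f$ for $t>t_0$ and $f\geq\tilde f$ for $t<t_0$; the backward half does reduce to the forward half under the reflection $t\mapsto 2t_0-t$, which is an order-preserving bijection between solution sets and therefore sends the maximal solution to the maximal solution; and the forward half is exactly the assertion that an absolutely continuous subsolution lies below the maximal solution.

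The genuine gap is in the one step that carries all the weight. At the first crossing time $t_1$ you bound the integrand from below by $\tilde f(s,\tilde x^{\varepsilon}_s)-\tilde f(s,x_s)+\varepsilon$ and conclude that this is positive near $t_1$ ``since $g_s\to 0$''. Under Carath\'eodory hypotheses that inference fails: $\tilde f$ is only measurable in $s$, so $\tilde f(s,\tilde x^{\varepsilon}_s)-\tilde f(s,x_s)$ need not tend to $0$ as $s\to t_1^-$ even though both state arguments converge to the same point; continuity in the state variable for each fixed $s$ gives no uniformity in $s$. What you actually need is that the average $\frac{1}{t_1-t}\int_t^{t_1}\bigl|\tilde f(s,\tilde x^{\varepsilon}_s)-\tilde f(s,x_s)\bigr|\,ds$ drop below $\varepsilon$ for some $t<t_1$, and the domination by $2m_s$ alone gives only $O(1)$ for this average at an arbitrary (not necessarily Lebesgue) point $t_1$, not $o(1)$. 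You correctly flag this as ``the genuinely delicate part'' but do not close it, and the same difficulty silently underlies your unproved claims that the $\tilde x^{\varepsilon}$ are monotone in $\varepsilon$ and converge to the maximal solution. The standard repair is to restructure rather than to estimate at the crossing: set $F(t,y):=\tilde f(t,\max(y,x_t))$, which is again Carath\'eodory with the same dominating function $m_t$, and let $z$ be a maximal solution of $z'=F(t,z)$, $z_{t_0}=\tilde x_0$. On any maximal open interval $(a,b)$ where $z<x$ one has $z'=\tilde f(t,x_t)\geq f(t,x_t)=x'$ a.e.\ and $z(a)\geq x(a)$, so $z-x$ is nondecreasing there and cannot be negative --- a contradiction. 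Hence $z\geq x$ everywhere, so $F(t,z_t)=\tilde f(t,z_t)$, meaning $z$ solves the original equation and therefore $z\leq\tilde x$; combining, $x\leq z\leq\tilde x$. With that substitution (or with a careful measurable modulus-of-continuity argument, as in the differential-inequalities literature) your outline becomes a complete proof.
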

Then we have the following comparison theorem.
\begin{theorem}\label{Com}
Let $b(t,x,y),h(t,x,y)\in C_{b,lip}([0,T]\times\mathbb{R}^2)$ and $\sigma(t,x,y)\in C_{b,lip}^{1}([0,T]\times\mathbb{R}^2)$ be given.  If there exists three functions $\tilde{\sigma}$, $\tilde{f}$ and $\tilde{g}$ satisfying the Carath\'{e}odory conditions and the inequalities
\begin{align*}
x\sigma(t,x,y)\leq x\tilde{\sigma}(t,x,y),\
2G(f(t,x,y)-\tilde{f}(t,x,y))+ g(t,x,y)-\tilde{g}(t,x,y)\leq 0~~\text{in}\ \ [0,T]\times\mathbb{R}^2.
\end{align*}
Then for the unique solution $X_t$ of SDE \eqref{GSDE4}
\[
X_t\leq\tilde{\varphi}(t,B_t,\tilde{V}_t)
\]
holds for q.s. $\omega$ and every $t$ in the common interval where both sides are defined. Here $\tilde{\varphi}$ and $\tilde{V}$ are the maximal solutions to the problems
\begin{equation*}
\frac{d\tilde{\varphi}}{dx}=\tilde{\sigma}(t,x,\tilde{\varphi}),\ \ \tilde{\varphi}(t,0,v)=v\in\mathbb{R}.
\end{equation*}
and
\begin{equation*}
d\tilde{V}_t=\tilde{g}(t,B_t,\tilde{V}_t)dt+\tilde{f}(t,B_t(\omega),\tilde{V}_t)d\langle B\rangle_t,~~\tilde{V}_0=\tilde{X}_0
\end{equation*}
with $X_0\leq\tilde{X}_0$, respectively.
\end{theorem}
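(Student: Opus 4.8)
The plan is to exploit the sample representation of the solution established in Theorem \ref{lw8} and thereby reduce the stochastic comparison to two \emph{deterministic} (pathwise) ODE comparisons, to each of which Lemma \ref{lw1} applies directly. By Theorem \ref{lw8}, since here $\sigma\in C^1_{b,lip}([0,T]\times\mathbb{R}^2)$, the unique solution of \eqref{GSDE4} may be written as $X_t=\varphi(t,B_t,V_t)$, where $\varphi$ solves the spatial ODE \eqref{zz1} and $V$ solves the parametrized ODE \eqref{ODE4}, namely $dV_t=g(t,B_t,V_t)\,dt+f(t,B_t,V_t)\,d\langle B\rangle_t$ with $V_0=X_0$. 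The target inequality $\varphi(t,B_t,V_t)\leq\tilde\varphi(t,B_t,\tilde V_t)$ will then follow from three ingredients: (i) $\varphi(t,x,v)\leq\tilde\varphi(t,x,v)$ for all $x,v$; (ii) $V_t\leq\tilde V_t$ q.s.; and (iii) monotonicity of $\tilde\varphi(t,x,\cdot)$ in the initial datum.

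First I would compare $V$ and $\tilde V$. The key device is the absolute continuity of the quadratic variation: using the representing family $\mathcal{P}_M$ of $\hat{\mathbb{E}}$, for q.s. $\omega$ one has $d\langle B\rangle_t(\omega)=\theta^2_t(\omega)\,dt$ with $\theta^2_t(\omega)\in[\underline{\sigma}^2,\overline{\sigma}^2]$ for a.e. $t$. Hence, pathwise, $V$ solves the genuine ODE $dV_t=[\,g(t,B_t,V_t)+\theta^2_t f(t,B_t,V_t)\,]\,dt$, and likewise $\tilde V$ with $(\tilde g,\tilde f)$. To invoke Lemma \ref{lw1} with $t_0=0$ I must check the drift inequality $g+\theta^2_t f\leq \tilde g+\theta^2_t\tilde f$, i.e. $(g-\tilde g)+\theta^2_t(f-\tilde f)\leq 0$. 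This is exactly where the hypothesis enters: since $\theta^2_t\in[\underline{\sigma}^2,\overline{\sigma}^2]$, the elementary identity $\sup_{\lambda\in[\underline{\sigma}^2,\overline{\sigma}^2]}\lambda a=\overline{\sigma}^2a^+-\underline{\sigma}^2a^-=2G(a)$ gives $\theta^2_t(f-\tilde f)\leq 2G(f-\tilde f)$, so that $(g-\tilde g)+\theta^2_t(f-\tilde f)\leq (g-\tilde g)+2G(f-\tilde f)\leq 0$ by assumption. Together with $V_0=X_0\leq\tilde X_0=\tilde V_0$ and the fact that $\tilde V$ is the maximal solution, Lemma \ref{lw1} yields $V_t(\omega)\leq\tilde V_t(\omega)$ for q.s. $\omega$ on the common interval of existence.

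Next I would compare $\varphi$ and $\tilde\varphi$ in the spatial variable $x$, with $x$ playing the role of ``time'' and base point $x=0$. Both start at the same value $v$ at $x=0$, and the hypothesis $x\sigma(t,x,y)\leq x\tilde\sigma(t,x,y)$ is precisely the signed condition $(x-0)\sigma\leq(x-0)\tilde\sigma$ demanded by Lemma \ref{lw1}, which covers $x>0$ and $x<0$ simultaneously; as $\tilde\varphi$ is the maximal solution of $d\tilde\varphi/dx=\tilde\sigma(t,x,\tilde\varphi)$, Lemma \ref{lw1} gives $\varphi(t,x,v)\leq\tilde\varphi(t,x,v)$ for all $t,x,v$. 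Finally, monotonicity of $\tilde\varphi(t,x,\cdot)$ in the initial value — itself a further application of Lemma \ref{lw1}, comparing two $\tilde\sigma$-trajectories with ordered initial data — lets me chain the two estimates. For q.s. $\omega$,
$$
X_t=\varphi(t,B_t,V_t)\leq\tilde\varphi(t,B_t,V_t)\leq\tilde\varphi(t,B_t,\tilde V_t),
$$
which is the asserted inequality.

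The main obstacle is the correct passage from the $G$-expectation hypothesis to a pathwise statement, and it splits into two observations: (a) that $\langle B\rangle$ is q.s. absolutely continuous in $t$ with Radon--Nikodym density confined to $[\underline{\sigma}^2,\overline{\sigma}^2]$; and (b) that $2G$ is exactly the supremum of $\lambda\mapsto\lambda a$ over this interval, so that the single inequality $2G(f-\tilde f)+g-\tilde g\leq 0$ simultaneously controls every admissible volatility path. Everything else is bookkeeping: verifying the Carath\'eodory hypotheses so that Lemma \ref{lw1} applies, restricting attention to the common interval of existence, and checking that the exceptional set — where absolute continuity fails or the sample ODEs are not well posed — is polar.
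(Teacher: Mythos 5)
Your proposal is correct and follows essentially the same route as the paper's own proof: represent $X_t=\varphi(t,B_t,V_t)$ via the sample-solution theorem, use $d\langle B\rangle_t=\hat{\alpha}_t\,dt$ with $\hat{\alpha}_t\in[\underline{\sigma}^2,\overline{\sigma}^2]$ together with $2G(a)=\sup_{\lambda\in[\underline{\sigma}^2,\overline{\sigma}^2]}\lambda a$ to turn the $G$-hypothesis into a pathwise drift inequality, and then apply Lemma \ref{lw1} twice (once in $x$ for $\varphi$ versus $\tilde{\varphi}$, once in $t$ for $V$ versus $\tilde{V}$). The only difference is expository: you split the comparison $\varphi(t,x,v)\leq\tilde{\varphi}(t,x,\tilde{v})$ for $v\leq\tilde{v}$ into two sub-steps, which the paper states in a single application of Lemma \ref{lw1}.
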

\begin{proof}
According to the Lemma \ref{lw1}, we get
\[
\varphi(t,x,v)\leq\tilde{\varphi}(t,x,\tilde{v})
\]
provided $v\leq\tilde{v}$.
From \cite{DHP} or \cite{snz},  $d\langle B\rangle_t = \hat{\alpha}_t(\omega)dt$, where $\hat{\alpha}$ is well defined
for each $\omega$ and q.s. takes value  in $[\underline{\sigma}^2,\overline{\sigma}^2]$. Since $2G(f(t,x,v)-\tilde{f}(t,x,v))+g(t,x,v)-\tilde{g}(t,x,v)\leq 0$,
we obtain $$(f(t,x,v)-\tilde{f}(t,x,v))\hat{\alpha}_t+g(t,x,v)-\tilde{g}(t,x,v)\leq 0.$$
Then applying  Lemma \ref{lw1} again, we also have $V_t\leq\tilde{V}_t$ in the common interval where both sides are defined, which is the desired result.
\end{proof}

Now we consider some examples of its applications.
\begin{example} \label{lw12} {\upshape
Consider two $G$-SDEs with the same diffusion coefficient $\sigma$:
\begin{equation*}
\left\{\begin{aligned}
dX^{i}_t&=b^{i}(t,B_t,X^{i}_t)dt+h^{i}(t,B_t,X^{i}_t)d\langle B\rangle_t+\sigma(t,B_t,X^{i}_t) dB_t,\\
X^{i}_0&=X^{i}_0\qquad(i=1,2),
\end{aligned}
\right.
\end{equation*}
where $\sigma,~b^1,~b^2,~h^1, h^2$ satisfy the conditions in Theorem \ref{Com} and $X_{0}^1\leq X_{0}^2$, $b^1- b^2+2G(h^1- h^2)\leq 0$. Denote:
\[
g^{i}(t,x,v)=\partial_v\varphi^{-1}(t,x,v)(b^{i}(t,x,\varphi(t,x,v))-\partial_t\varphi(t,x,v)),
\]
\[
f^{i}(t,x,v)=\partial_v\varphi^{-1}(t,x,v)(h^{i}(t,x,\varphi(t,x,v))-\frac{1}{2}(\partial_x\sigma+\partial_y\sigma\sigma)(t,x,\varphi(t,x,v))),\qquad (i=1,2)
\]
One can easily show  that
\[
g^1-g^2+2G(f^1- f^2)\leq 0.
\]
Applying Theorem \ref{Com}, we obtain $X^{1}_t\leq X^{2}_t$  q.s..}
\end{example}
\begin{remark}{\upshape
In Example \ref{lw12}, we can also assume that $\sigma\in C_{b,lip}([0,T]\times\mathbb{R}^2)$. Indeed, applying Theorem \ref{lw3}, there exists a sequence $X^{i,n}\rightarrow X^i$ uniformly in $[0, T]$. Then we conclude $X^1_t\leq X^2_t$ from $X^{1,n}_t\leq X^{2,n}_t$ for each $t\in[0,T]$.}
\end{remark}
In particular, we obtain a necessary and sufficient condition for comparison theorem of $1$-dimensional $G$-SDEs.
\begin{theorem}
Consider two $G$-SDEs:
\begin{equation}
\left\{\begin{aligned}
dX^{i,x_i}_t&=b^{i}(X^{i,x_i}_t)dt+h^{i}(X^{i,x_i}_t)d\langle B\rangle_t+\sigma^i(X^{i,x_i}_t) dB_t,\\
X^{i,x_i}_0&=x_i,
\end{aligned}
\right.
\end{equation}
where $\sigma^i,b^i,h^i\in C_{b,lip}(\mathbb{R})$ and $i\in\{1,2\}$, then for each  $x_1\leq x_2$, $X^{1,x_1}_t\leq X^{2,x_2}_t$ if and only if \[b^1(x)-b^2(x)+2G(h^1(x)-h^2(x))\leq 0, \ \ \ \sigma^1(x)=\sigma^2(x), \ \ \forall x\in\mathbb{R}.\]
 \end{theorem}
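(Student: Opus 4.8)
This is a necessary-and-sufficient characterization for when one autonomous $G$-SDE dominates another with ordered initial data. I need to prove two directions: sufficiency (the conditions imply comparison) and necessity (comparison forces these conditions). Let me think about both.

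**The sufficiency direction.** If $\sigma^1 = \sigma^2 =: \sigma$ and $b^1 - b^2 + 2G(h^1 - h^2) \le 0$, I want $X^{1,x_1}_t \le X^{2,x_2}_t$. This should follow almost directly from Theorem \ref{Com} (or the Example following it). With a common diffusion coefficient $\sigma$, both SDEs share the same $\varphi$ from the ODE $\partial_x \varphi = \sigma(\varphi)$. Then I pass to the transformed drivers $g^i, f^i$ via the change of variables. The key algebraic point is that in the autonomous case $\partial_v\varphi^{-1}$ is a common positive factor, so $g^1 - g^2 + 2G(f^1 - f^2) \le 0$ reduces (via positive homogeneity and subadditivity of $G$) to $b^1 - b^2 + 2G(h^1 - h^2) \le 0$. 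So sufficiency is essentially Theorem \ref{Com}/Example \ref{lw12}.

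**The necessity direction — the real work.** Suppose $X^{1,x_1}_t \le X^{2,x_2}_t$ holds for all $x_1 \le x_2$. I must extract both $\sigma^1 = \sigma^2$ and the drift inequality. The plan is to use small-time asymptotics. First I would take $x_1 = x_2 = x$ and look at the leading-order behavior of $X^{2,x}_t - X^{1,x}_t \ge 0$ as $t \downarrow 0$. The dominant contribution near $t=0$ comes from the stochastic integrals $\int_0^t (\sigma^2 - \sigma^1)(X^{i}_s)\,dB_s$, which behave like $(\sigma^2(x) - \sigma^1(x))B_t$ to leading order. Since $B_t$ can be both positive and negative with positive capacity (it is nondegenerate, as $\underline\sigma^2 > 0$ is implicit), a nonzero $\sigma^2(x) - \sigma^1(x)$ would force sign changes in the difference, contradicting nonnegativity quasi-surely. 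This forces $\sigma^1(x) = \sigma^2(x)$ for every $x$. The hard part is making this heuristic rigorous in the $G$-framework: I would divide by $t$ or by an appropriate normalization and use the BDG inequalities (Lemma with constant $C_p$) together with the representation $\hat{\mathbb{E}}[\,\cdot\,] = \sup_{P} E_P$ to control the $ds$ and $d\langle B\rangle_s$ terms as $o(\sqrt{t})$ while the $dB_s$ term is $O(\sqrt t)$ and genuinely two-sided.

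**Finishing and the obstacle.** Once $\sigma^1 = \sigma^2$, the two equations share $\varphi$, and I can transform both into the $V$-ODEs as in Theorem \ref{lw8}. Comparison of $X^{i}$ becomes comparison of the finite-variation processes $V^i$, whose drivers are $g^i\,dt + f^i\,d\langle B\rangle_t = (g^i + f^i \hat\alpha_t)\,dt$, using $d\langle B\rangle_t = \hat\alpha_t\,dt$ with $\hat\alpha_t \in [\underline\sigma^2, \overline\sigma^2]$. Then $V^1_t \le V^2_t$ for q.s.\ $\omega$ must hold, and by the comparison lemma this requires $g^1 - g^2 + (f^1 - f^2)\hat\alpha_t \le 0$ for every admissible value of $\hat\alpha_t$; ranging $\hat\alpha$ over $[\underline\sigma^2, \overline\sigma^2]$ and taking the supremum recovers exactly $g^1 - g^2 + 2G(f^1 - f^2) \le 0$, which back-transforms to $b^1 - b^2 + 2G(h^1 - h^2) \le 0$. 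The main obstacle I anticipate is the rigorous short-time argument for $\sigma^1 = \sigma^2$: I need that the set $\{B_t > 0\}$ and $\{B_t < 0\}$ both have positive capacity and that the drift/quadratic-variation terms are genuinely lower order, which requires careful use of Lemma \ref{lem5} and the BDG estimates rather than a naive pathwise Taylor expansion, since paths of $B$ are only Hölder and the capacity $c$ does not behave like a single probability measure.
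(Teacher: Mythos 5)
Your overall architecture (sufficiency from Theorem \ref{Com}/Example \ref{lw12}; necessity by small-time asymptotics of the integrated inequality) matches the paper's, but two of your key steps have genuine gaps. First, for $\sigma^1=\sigma^2$: knowing merely that $\{B_t>0\}$ and $\{B_t<0\}$ have positive capacity is not enough, because after subtracting the frozen coefficients the error terms $\int_0^t\gamma_s\,dB_s$, $\int_0^t\alpha_s\,ds$, $\int_0^t\beta_s\,d\langle B\rangle_s$ are only $o(\sqrt{t})$, while the main term $(\sigma^1(x)-\sigma^2(x))B_t$ is itself $O(\sqrt{t})$; to conclude you need a quantitative statement that the oscillation of $B_t/\sqrt{t}$ as $t\downarrow 0$ beats these errors. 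The paper supplies exactly this tool, namely the law-of-the-iterated-logarithm-type facts $c(\limsup_{t\downarrow 0}B_t/\sqrt{t}=+\infty)=1$ and $c(\liminf_{t\downarrow 0}B_t/\sqrt{t}=-\infty)=1$, and then passes to a subsequence $r_n$ along which $B_{r_n}/\sqrt{r_n}\to+\infty$ so that the main term dominates everything else in \eqref{3lw1}. You correctly flag this as the hard part but do not identify the tool that closes it.

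Second, your route to the drift inequality is both different from the paper's and, as stated, does not go through. You propose to transform to the $V$-ODEs and argue that $V^1\le V^2$ ``by the comparison lemma requires'' $g^1-g^2+(f^1-f^2)\hat\alpha_t\le 0$ for every admissible $\hat\alpha$. Lemma \ref{lw1} only asserts that the pointwise drift inequality is \emph{sufficient} for ODE comparison; reversing it requires a separate localization/derivative-at-zero argument, and extracting the supremum over $\hat\alpha\in[\underline{\sigma}^2,\overline{\sigma}^2]$ (to produce $2G(f^1-f^2)$) requires showing that every such density value is realized near $t=0$ on a set of positive capacity. Neither step is supplied. The paper avoids all of this: it simply takes $\hat{\mathbb{E}}$ of the integrated inequality \eqref{3lw1}, uses $\hat{\mathbb{E}}[(h^1(x)-h^2(x))\langle B\rangle_t]=2G(h^1(x)-h^2(x))t$ together with the fact that the remainder has expectation $o(t)$, divides by $t$ and lets $t\downarrow 0$ to get $b^1(x)-b^2(x)+2G(h^1(x)-h^2(x))\le 0$ directly, with no need to pass through $\varphi$ or $V$ at all.
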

 \begin{proof}
We shall only have to prove that from  $X^{1,x}_t\leq X^{2,x}_t$ for each $x\in\mathbb{R}$, we infer
that $$b^1(x)-b^2(x)+2G(h^1(x)-h^2(x))\leq 0, \ \ \sigma^1(x)=\sigma^2(x).$$

By $X^{1,x}_t\leq X^{2,x}_t$, we get
\begin{align*}
&\int^t_0b^{1}(X^{1,x}_s)ds+\int^t_0h^{1}(X^{1,x}_s)d\langle B\rangle_s+\int^t_0\sigma^1(X^{1,x}_s) dB_s\\
&\leq \int^t_0b^{2}(X^{2,x}_s)ds+\int^t_0h^{2}(X^{2,x}_s)d\langle B\rangle_s+\int^t_0\sigma^2(X^{2,x}_s) dB_s.
\end{align*}
Set $\alpha^i_s=b^{i}(X^{i,x}_s)-b^{i}(x)$, $\beta^i_s=h^{i}(X^{i,x}_s)-h^{i}(x)$ and $\gamma^i_s=\sigma^{i}(X^{i,x}_s)-\sigma^{i}(x)$.
From Peng \cite{Peng4}), there exists some constant $C$ such that,
\[
\hat{\mathbb{E}}[\sup\limits_{s\in[0,t]}(|\alpha^i_s|^2+|\beta^i_s|^2+|\gamma^i_s|^2)]\leq Ct.
\]
For each  $t\in[0,T]$, we have
\begin{align}\label{3lw1}
&(b^{1}(x)-b^{2}(x))t+(h^{1}(x)-h^{2}(x))\langle B\rangle_t+(\sigma^{1}(x)-\sigma^{2}(x))B_t\nonumber\\&
\leq \int^t_0(\alpha^2_s-\alpha^1_s)ds+\int^t_0(\beta^2_s-\beta^1_s)d\langle B\rangle_s+\int^t_0(\gamma^2_s-\gamma^1_s) dB_s.
\end{align}

Applying BDG inequalities, we can find some constant  $C$ so that
\[
\hat{\mathbb{E}}[|\int^t_0\gamma^i_s dB_s|^2]\leq C\int^t_0\hat{\mathbb{E}}[|\gamma^i_s|^2]ds\leq Ct^2.
\]
Thus q.s.
\[\lim\limits_{t\downarrow 0}\int^t_0\frac{\gamma^i_s}{\sqrt{t}}dB_s=0.
\]
In a similar way we can also obtain q.s.\[ \lim\limits_{t\downarrow 0}\frac{1}{\sqrt{t}}\int^t_0\alpha^i_sds=0,\ \ \
\lim\limits_{t\downarrow 0}\frac{1}{\sqrt{t}}\int^t_0\beta^i_sd\langle B\rangle_s=0.\]

Recalling that
\[
c(\limsup\limits_{t\downarrow 0}\frac{B_t}{\sqrt{t}}=+\infty)=1, \ \ c(\liminf\limits_{t\downarrow 0}\frac{B_t}{\sqrt{t}}=-\infty)=1,
\]
then there exists a subset $\Omega_0\subset\Omega$ with $c(\Omega_0)=1$, such that for each $\omega\in\Omega_0$, we can find a sequence $(r_n:=r_n(\omega))$ so that $\lim\limits_{r_n\downarrow 0}\frac{B_{r_n}}{\sqrt{r_n}}=+\infty$.
By equation  \eqref{3lw1}, we derive that
\begin{align*}
&(\sigma^{1}(x)-\sigma^{2}(x))\lim\limits_{r_n\downarrow 0}\frac{B_{r_n}}{\sqrt{r_n}}\leq 0,
\end{align*}
Consequently, $\sigma^1(x)\leq \sigma^2(x)$.  Similarly we can prove $\sigma^1(x)\geq \sigma^2(x)$, then,
\[
\sigma^1(x)=\sigma^2(x).
\]
Finally, taking  expectation on both sides of equation \eqref{3lw1} yields
\begin{align*}
b^{1}(x)-b^{2}(x)+2G(h^{1}(x)-h^{2}(x))
\leq \lim\limits_{t\rightarrow 0}\frac{1}{t}\hat{\mathbb{E}}[\int^t_0(\alpha^2_s-\alpha^1_s)ds+\int^t_0(\beta^2_s-\beta^1_s)d\langle B\rangle_s]=0,
\end{align*}
which completes the proof.
 \end{proof}

\begin{remark}{\upshape
Let $ \sigma^1=\sigma^2=b^1=h^2=0,b^2=\overline{\sigma}^2,h^1=1$,  one can show that
$b^1(x)-b^2(x)+2G(h^1(x)-h^2(x))\leq 0$ and $X^{1.x}_t=x+\langle B\rangle_t\leq x+ \overline{\sigma}^2t=X^{2,x}_t$ q.s.. Thus $b^1(x)-b^2(x)+2G(h^1(x)-h^2(x))\leq 0$ does not
imply $b^1(x)\leq b^2(x)$ and $h^1(x)\leq h^2(x)$.}
\end{remark}

\begin{example}{\upshape
Consider two $G$-SDEs with different diffusion coefficients:
\begin{equation*}
\left\{
\begin{aligned}
dX_{t}^i&=\sigma_{i}(X_t^{i})dB_t+\frac{1}{2}\sigma_{i}(X_{t}^{i})\sigma_{i}^{\prime}(X_{t}^i)d\langle B\rangle_t,\\
X_{0}^i&=X_{0}^i \qquad (i=1,2),
\end{aligned}
\right.
\end{equation*}
where $\sigma_i>0$ and $\sigma_i\in C^{1}_{b,lip}(\mathbb{R})$.
Consider the following initial value problems:
\[
\frac{d\varphi_i}{dx}=\sigma_{i}(\varphi_i),\quad \varphi_{i}(0)=v_i \quad (i=1,2).
\]
Clearly, the solutions $\varphi_{i}(x,v_i)$ satisfy the equalities
\[
\int_{v_1}^{\varphi_{1}(x,v_1)}\frac{ds}{\sigma_{1}(s)}=x=\int_{v_2}^{\varphi_{2}(x,v_2)}\frac{ds}{\sigma_{2}(s)}.
\]
Note that $b_i\equiv 0$, we can obtain $V_i\equiv X_{0}^i$ by equation \eqref{ODE3}. Hence, if for every $x\in\mathbb{R}$ the inequality
\[
\int_{X_{0}^1}^{x}\frac{dy}{\sigma_{1}(y)}\geq\int_{X_{0}^2}^{x}\frac{dy}{\sigma_{2}(y)}
\]
holds, then $\varphi_1(x,X_{0}^1)\leq\varphi_2(x,X_{0}^2)$ and therefore q.s.
\[
X_{t}^1=\varphi_1(B_t,X_{0}^1)\leq\varphi_2(B_t,X_{0}^2)=X_{t}^2.
\]}
\end{example}
\begin{example} \label{lw2} {\upshape
Consider the following  $G$-SDE:
\begin{equation*}
\left\{\begin{aligned}
dX_t&=b(X_t)dt+h(X_t)d\langle B\rangle_t+\sigma(X_t) dB_t,\\
X_0&=X_0,
\end{aligned}
\right.
\end{equation*}
where $b, h\in C_{b,lip}(\mathbb{R})$ and $\sigma\in C^1_{b,lip}(\mathbb{R})$.
Then we have for some constant $C$
\[
|\sigma(x)|\leq C, \ \, |g(x,v)|\leq Ce^{C|x|},\ \, |f(x,v)|\leq Ce^{C|x|}.
\]
Let $\tilde{\sigma}(x)=Csgn(x)$, $\tilde{g}(x,v)=Ce^{C|x|}$ and $\tilde{f}(x,v)=Ce^{C|x|}$, combining these three inequalities and using Theorem \ref{Com}, we obtain an asymptotic estimation for the paths of $G$-It\^{o} diffusion process  $X_t$, for q.s. $\omega$,
\[
X_t\leq C|B_t|+C\int^t_0e^{C|B_s|}ds+X_0.
\]
A symmetric argument shows that, for q.s. $\omega$,
\[
X_t\geq -C|B_t|-C\int^t_0e^{C|B_s|}ds+X_0.
\]}
\end{example}
\section{Appendix}
$G$-It\^{o} formula for a $G$-It\^{o} process was obtained by Peng \cite {Peng4}  and improved by Gao \cite{G}, Zhang et al \cite{XZ} in $\mathbb{L}_G^p(\Omega)$. Li and Peng \cite{LP}  significantly improved the previous ones for a general $C^{1,2}$-function in a lager space ${L}^p_b(\Omega)$ instead of  $\mathbb{L}_G^p(\Omega)$ (see also  Lin \cite{L}, \cite{LY}). For reader's convenience, we give the following $G$-It\^{o} formula. Indeed, it can be viewed as a special case of Theorem 2.33 of Lin \cite{LY}.

For each $0\leq t\leq T$, consider a $G$-It\^{o} process:
\[
X_t=X_0+\int_{0}^{t}f_{u}du+\int_{0}^{t}h_{u}d\langle B\rangle _u+\int_{0}^{t}g_{u}dB_u.
\]
\begin{theorem}\label{lw5}
Suppose $\varphi\in C([0,T]\times\mathbb{R}^2)$ satisfies for each $t_1,t_2\in[0,T]$, $x_1,x_2,v_1,v_2\in\mathbb{R}$,
\begin{align*}
|\psi(t_1,x_1,v_1)-\psi(t_2,x_2,v_2)|\leq C(1+|x_1|+|x_2|)e^{C(|x_1|+|x_2|)}(|t_1-t_2|+|x_1-x_2|+|v_1-v_2|),
\end{align*}
where $\psi=\partial_t\varphi,\partial_x\varphi, \partial_{xx}^2\varphi$ and $\partial_v\varphi$.
Let $f$, $h$ and $g$ be bounded processes in $M_{G}^2(0,T)$. If for each $p\geq 1$, the continuous finite variation process $V_t\in{L}_b^p(\Omega_t)$
and there exists some constant $C_p$ such that for each $s\leq t\in[0,T]$:
\[
\hat{\mathbb{E}}[|T^{V}_t-T^V_s|^p]\leq C_p|t-s|^p,
\]
where $T^V$ is the total variation process of $V$.
Then  in ${L}_b^2(\Omega_t)$,
\begin{align*}
\varphi(t,X_t,V_t)=&\varphi(0,X_0,V_0)+\int^t_0\partial_u \varphi(u,X_{u},V_u) du+\int^t_0\partial_x \varphi(u,X_{u},V_u)f_u du\\
&+\int^t_0\partial_x \varphi(u,X_{u},V_u)h_u d\langle B\rangle_u+\int^t_0\partial_x \varphi(u,X_{u},V_u)g_u dB_u\\
&+\int^t_0\partial_v\varphi(u,X_{u},V_u) dV_u+\frac{1}{2}\int^t_0\partial^2_{xx}\varphi(u,X_{u},V_u)g_{u}^2 d\langle B\rangle_u.
\end{align*}
\end{theorem}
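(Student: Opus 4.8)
The plan is to prove the identity by the classical partition-and-Taylor-expansion scheme for It\^o's formula, carried out inside the sublinear space $L_b^2(\Omega_t)$, with every integrability issue resolved by exponential moment estimates. First I would check that the right-hand side is well defined. Since $f,h,g$ are bounded and $\langle B\rangle_u\leq\overline{\sigma}^2u$, the finite-variation part of $X$ is bounded; for the martingale part $Y_t:=\int_0^t g_u\,dB_u$ the BDG inequality for the $G$-stochastic integral gives $\hat{\mathbb{E}}[\sup_{0\le t\le T}|Y_t|^n]\le C_n(\|g\|_\infty^2 T)^{n/2}$, and summing the exponential series exactly as in Lemma \ref{lem5} yields $\hat{\mathbb{E}}[\sup_{0\le t\le T}e^{p|X_t|}]<\infty$ for every $p$. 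Combined with the assumed Lipschitz-growth bound on $\psi=\partial_t\varphi,\partial_x\varphi,\partial^2_{xx}\varphi,\partial_v\varphi$ (which forces at most exponential growth in the $x$-slot), this shows that each derivative process $\partial_x\varphi(u,X_u,V_u)$, etc.\ lies in $M_G^p$ for all $p$; multiplying by the bounded $f,h,g$ keeps them in $M_G^2$, so every $du$, $d\langle B\rangle$ and $dB$ integral is a legitimate element of $L_b^2(\Omega_t)$, while $\int_0^t\partial_v\varphi\,dV_u$ is a pathwise Stieltjes integral whose $L_b^2$-norm is controlled by the total-variation moment bound $\hat{\mathbb{E}}[|T^V_t-T^V_s|^p]\le C_p|t-s|^p$.

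Next I would fix a partition $0=t_0<\cdots<t_N=t$ with mesh $\delta$, telescope, and Taylor-expand each increment to second order in the $x$-slot and first order in the $t$- and $v$-slots:
$$\varphi(t_{k+1},X_{t_{k+1}},V_{t_{k+1}})-\varphi(t_k,X_{t_k},V_{t_k})=\partial_t\varphi\,\Delta t_k+\partial_x\varphi\,\Delta X_k+\partial_v\varphi\,\Delta V_k+\tfrac12\partial^2_{xx}\varphi\,(\Delta X_k)^2+R_k,$$
all derivatives being evaluated at $(t_k,X_{t_k},V_{t_k})$. Summing over $k$ and letting $\delta\to0$, the first term tends to $\int_0^t\partial_t\varphi\,du$; decomposing $\Delta X_k$ into its $du$, $d\langle B\rangle$ and $dB$ pieces, the second tends to $\int_0^t\partial_x\varphi\,(f_u\,du+h_u\,d\langle B\rangle_u+g_u\,dB_u)$; the third tends to the Stieltjes integral $\int_0^t\partial_v\varphi\,dV_u$; and the quadratic term is designed to converge to $\tfrac12\int_0^t\partial^2_{xx}\varphi\,g_u^2\,d\langle B\rangle_u$. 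Each limit is justified by continuity of the derivative processes in $u$, the exponential moment bounds, the BDG inequality for the $dB$ term, and the total-variation moment estimate for the $dV$ term; these provide the uniform integrability that substitutes for the dominated-convergence argument unavailable under $\hat{\mathbb{E}}$.

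I expect the main obstacle to be the quadratic term. One must show that in $(\Delta X_k)^2$ only the square of the stochastic part survives: the $du$- and $d\langle B\rangle$-squares and all cross terms are $O(\delta)$ times bounded-variation increments (for instance $\sum_k(\Delta\langle B\rangle_k)^2\le\overline{\sigma}^2\delta\,\langle B\rangle_t\to0$), hence vanish, while one is left to prove $\sum_k\partial^2_{xx}\varphi(t_k,X_{t_k},V_{t_k})\big[(\int_{t_k}^{t_{k+1}}g\,dB)^2-g_{t_k}^2\,\Delta\langle B\rangle_k\big]\to0$ in $L_b^1$. Because orthogonality of martingale increments is not available in the sublinear setting, this step rests on the $G$-stochastic-calculus fact that $\sum_k(\int_{t_k}^{t_{k+1}}g\,dB)^2\to\int_0^t g_u^2\,d\langle B\rangle_u$ in $L_G^1$, together with the exponential moment control of the possibly unbounded weight $\partial^2_{xx}\varphi(t_k,X_{t_k},V_{t_k})$; this is the genuinely $G$-specific difficulty. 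Finally the Taylor remainders $\sum_k R_k$ are dominated using the local-Lipschitz modulus of the derivatives and the quasi-sure uniform continuity of $u\mapsto(X_u,V_u)$ on $[0,t]$, so they vanish with $\delta$.

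Collecting all limits gives the identity in $L_b^2(\Omega_t)$. For $\varphi$ whose derivatives are of the stated exponential growth rather than bounded, I would first establish the formula when the derivatives have compact support, where the partition argument is clean, and then pass to the limit along a mollified and truncated family $\varphi^{(m)}\to\varphi$, the convergence again being secured by the exponential moment bounds on $X$. Alternatively, one simply verifies that the present hypotheses (regularity of $\varphi$, boundedness of $f,h,g$, and the total-variation moment bound on $V$) meet those of Theorem 2.33 of Lin \cite{LY}, of which this statement is a special case, and the independent argument above merely re-derives it.
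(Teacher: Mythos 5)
Your proposal is essentially correct as a program, but it takes a genuinely different route from the paper: the paper's entire proof is a citation --- it observes that the exponential-moment bound of Lemma \ref{lem5} makes the hypotheses of Theorem 5.4 of Li--Peng \cite{LP} and Theorem 2.33 of Lin \cite{LY} applicable, and the formula follows as a special case. You instead re-derive the It\^{o} formula from scratch by the partition-and-Taylor scheme, which is exactly the content of those cited theorems; your closing sentence correctly identifies the citation as the short path the authors actually take. What your longer route buys is transparency about where each hypothesis enters (the exponential moments of $X$ control the unbounded weights $\psi(u,X_u,V_u)$; the total-variation moment bound tames the $dV$ term and the mixed Taylor remainders; the $G$-specific convergence $\sum_k(\int_{t_k}^{t_{k+1}}g\,dB)^2\to\int_0^t g_u^2\,d\langle B\rangle_u$ replaces the missing orthogonality of increments), and you correctly single out the quadratic term as the only genuinely $G$-theoretic difficulty. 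One imprecision worth fixing if you were to write this out: since $V_t$ is only assumed to lie in $L_b^p(\Omega_t)$ and need not have a quasi-continuous version, the integrand processes $\partial_x\varphi(u,X_u,V_u)$ etc.\ cannot be claimed to lie in $M_G^p(0,T)$; they live in the larger completion of step processes under the $L_b^p$-type norm, and the stochastic integrals must be understood in the extended sense of Li--Peng. That is precisely why the conclusion is stated in $L_b^2(\Omega_t)$ rather than $\mathbb{L}_G^2(\Omega_t)$, and why the paper leans on \cite{LP} and \cite{LY} rather than on the basic $M_G^2$ calculus.
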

\begin{proof}
The proof is immediate in light of Lemma \ref{lem5}, Theorem 5.4 of Li and Peng \cite{LP} and Theorem 2.33 of Lin \cite{LY}.
\end{proof}

\textbf{Acknowledgement}: The authors would like to thank Prof. Peng, S. for   his helpful discussions and
suggestions. The authors also thank the editor and two anonymous referees for their careful reading,  helpful
suggestions.

%%%%%%%%%%%%%%%%%%%%%%%%%%%%%%%%%%%%%%%%%%%%%%%%%%%%%%%%%%%%%%%%%%%%%%%%%%%%%%%%%%%

\end{document}